\documentclass[10pt]{article}

\usepackage[utf8]{inputenc}
\usepackage[T1]{fontenc}

\usepackage{epsf}
\usepackage{amsmath}

\allowdisplaybreaks

\usepackage[showframe=false]{geometry}
\usepackage{changepage}

\usepackage{epsfig}
\usepackage{amssymb}

\usepackage{amsthm}
\usepackage{setspace}
\usepackage{cite}
\usepackage{mcite}

\usepackage{algorithmic}  
\usepackage{algorithm}

\usepackage{shadow}
\usepackage{fancybox}
\usepackage{fancyhdr}

\usepackage{color}
\usepackage[usenames,dvipsnames,svgnames,table]{xcolor}
\newcommand{\bl}[1]{\textcolor{blue}{#1}}

\definecolor{mypurple}{rgb}{.4,.0,.5}

\usepackage[hyphens]{url}

\usepackage[colorlinks=true,
            linkcolor=black,
            urlcolor=blue,
            citecolor=purple]{hyperref}

\usepackage{breakurl}

\def\y{{\bf y}}

\def\x{{\bf x}}

\def\x{{\mathbf x}}

\def\u{{\bf u}}

\def\x{{\bf x}}
\def\y{{\bf y}}

\def\q{{\bf q}}
\def\m{{\bf m}}
\def\a{{\bf a}}
\def\b{{\bf b}}
\def\c{{\bf c}}

\def\h{{\bf h}}

\def\be{\begin{equation}}
\def\ee{\end{equation}}
\def\ba{\left[\begin{array}}
\def\ea{\end{array}\right]}

\def\u{{\bf u}}

\def\x{{\bf x}}
\def\y{{\bf y}}

\def\q{{\bf q}}
\def\a{{\bf a}}
\def\b{{\bf b}}
\def\c{{\bf c}}

\def\p{{\bf p}}

\def\1{{\bf 1}}

\def\g{{\bf g}}
\def\0{{\bf 0}}

\def\calX{{\cal X}}
\def\calY{{\cal Y}}







\def\mR{{\mathbb R}}
\def\mN{{\mathbb N}}
\def\mE{{\mathbb E}}
\def\mS{{\mathbb S}}

\def\lp{\left (}
\def\rp{\right )}

\sloppy



\def\y{{\bf y}}

\def\x{{\bf x}}

\def\x{{\mathbf x}}

\def\u{{\bf u}}

\def\x{{\bf x}}
\def\y{{\bf y}}

\def\q{{\bf q}}
\def\a{{\bf a}}
\def\b{{\bf b}}
\def\c{{\bf c}}

\def\h{{\bf h}}

\def\be{\begin{equation}}
\def\ee{\end{equation}}
\def\ba{\left[\begin{array}}
\def\ea{\end{array}\right]}

\def\u{{\bf u}}

\def\x{{\bf x}}
\def\y{{\bf y}}

\def\q{{\bf q}}
\def\a{{\bf a}}
\def\b{{\bf b}}
\def\c{{\bf c}}

\def\p{{\bf p}}

\def\({\left (}
\def\){\right )}

\def\1{{\bf 1}}
\def\m{{\bf m}}
\def\q{{\bf q}}

\def\g{{\bf g}}
\def\0{{\bf 0}}

\def\cX{{\mathcal X}}
\def\cY{{\mathcal Y}}

\usepackage{xcolor}
\usepackage{color}

\definecolor{darkgreen}{rgb}{0, 0.4,0}

\definecolor{purplebrown}{rgb}{0.5,0.1,0.6}

\definecolor{ultclupcol}{rgb}{0.1,0.5,0.5}

\definecolor{mytrycolor}{rgb}{0.5,0.7,0.2}


\definecolor{ultclupcola}{rgb}{.5,0,.5}

\definecolor{shadebrown}{rgb}{0.1,0.1,0.9}
\definecolor{lightblue}{rgb}{0.2,0,1}


\usepackage{fancybox}
\usepackage{graphicx}
\usepackage{epstopdf}
\usepackage{epsfig}
\usepackage{wrapfig}
\usepackage{subfigure}

\usepackage{xcolor}
\usepackage{tcolorbox}
\tcbuselibrary{skins}

%
%


\newtcbox{\xmybox}{on line,
arc=7pt,
before upper={\rule[-3pt]{0pt}{10pt}},boxrule=0pt,
boxsep=0pt,left=6pt,right=6pt,top=0pt,bottom=0pt,enhanced, coltext=blue, colback=white!10!yellow}

\newtcbox{\xmyboxa}{on line,
arc=7pt,
before upper={\rule[-3pt]{0pt}{10pt}},boxrule=0pt,
boxsep=0pt,left=6pt,right=6pt,top=0pt,bottom=0pt,enhanced, colback=white!10!yellow}

\newtcbox{\xmyboxb}{on line,
arc=7pt,
before upper={\rule[-3pt]{0pt}{10pt}},boxrule=1pt,colframe=darkgreen!100!blue,
boxsep=0pt,left=6pt,right=6pt,top=0pt,bottom=0pt,enhanced, colback=white!10!yellow}

\newtcbox{\xmyboxc}{on line,
arc=7pt,
before upper={\rule[-3pt]{0pt}{10pt}},boxrule=.7pt,colframe=blue!100!blue,
boxsep=0pt,left=6pt,right=6pt,top=0pt,bottom=0pt,enhanced, coltext=blue, colback=white!10!yellow}

\newtcbox{\xmytboxa}{on line,
arc=7pt,
before upper={\rule[-3pt]{0pt}{10pt}},boxrule=.0pt,colframe=pink!50!yellow,
boxsep=0pt,left=6pt,right=6pt,top=0pt,bottom=0pt,enhanced, coltext=white, colback=blue!40!red}

\newtcbox{\xmytboxb}{on line,
arc=7pt,
before upper={\rule[-3pt]{0pt}{10pt}},boxrule=.0pt,colframe=pink!50!yellow,
boxsep=0pt,left=6pt,right=6pt,top=0pt,bottom=0pt,enhanced, coltext=white, colback=white!40!green}


%
%
%
%

\setcounter{secnumdepth}{5}
\setcounter{tocdepth}{5}

\makeatletter
\newcommand\subsubsubsection{\@startsection{paragraph}{4}{\z@}{-2.5ex\@plus -1ex \@minus -.25ex}{1.25ex \@plus .25ex}{\normalfont\normalsize\bfseries}}
\newcommand\subsubsubsubsection{\@startsection{subparagraph}{5}{\z@}{-2.5ex\@plus -1ex \@minus -.25ex}{1.25ex \@plus .25ex}{\normalfont\normalsize\bfseries}}
\makeatother


\newtheorem{theorem}{Theorem}

\newtheorem{corollary}{Corollary}

\setlength{\oddsidemargin}{0in} \setlength{\evensidemargin}{0in}
\setlength{\textwidth}{6.5in} 
\setlength{\textheight}{9in} 
\setlength{\topmargin}{-0.25in}

\begin{document}

\begin{singlespace}

\title {Fully lifted random duality theory 
}
\author{
\textsc{Mihailo Stojnic
\footnote{e-mail: {\tt flatoyer@gmail.com}} }}
\date{}
\maketitle

\centerline{{\bf Abstract}} \vspace*{0.1in}

We study  a generic class of \emph{random optimization problems} (rops) and their typical behavior. The foundational aspects of the random duality theory (RDT), associated with rops, were discussed in \cite{StojnicRegRndDlt10}, where it was shown that one can often infer rops' behavior even without actually solving them. Moreover, \cite{StojnicRegRndDlt10}  uncovered that various quantities relevant to rops (including, for example, their typical objective values) can be determined (in a large dimensional context) even completely analytically. The key observation was that the \emph{strong deterministic duality} implies the, so-called, \emph{strong random duality} and therefore the full exactness of the analytical RDT characterizations. Here, we attack precisely those scenarios where the strong deterministic duality is not necessarily present and connect them to the recent progress made in studying bilinearly indexed (bli) random processes in \cite{Stojnicnflgscompyx23,Stojnicsflgscompyx23}. In particular, utilizing a fully lifted (fl) interpolating comparison mechanism introduced in  \cite{Stojnicnflgscompyx23}, we  establish corresponding \emph{fully lifted} RDT (fl RDT). We then rely on a stationarized fl interpolation realization introduced in \cite{Stojnicsflgscompyx23} to obtain complete \emph{statitionarized} fl RDT (sfl RDT).
A few well known problems are then discussed as illustrations of a wide range of practical applications implied by the generality of the considered rops.

\vspace*{0.25in} \noindent {\bf Index Terms: Random optimization/feasibility problems; Fully lifted random duality theory}.

\end{singlespace}

\section{Introduction}
\label{sec:back}

In this paper we study a generic class of random linearly constrained optimization problems. For a given function $f(\x):\mR^n\rightarrow\mR$ and a given set $\cX\subseteq \mR^n$, we consider
\begin{eqnarray}
\min_{\x\in\cX} & & f(\x)\nonumber \\
\mbox{subject to} & & A\x=0\nonumber \\
& & B\x\leq 0, \label{eq:lincons}
\end{eqnarray}
where $A\in\mR^{m_1\times n}$ is an $m_1\times n$ matrix  and $B\in\mR^{m_2\times n}$ is an $m_2\times n$ matrix. To facilitate the exposition, unless stated otherwise, we consider only the scenarios where the objective can be computed, i.e., where it exists and is bounded.

The program in (\ref{eq:lincons}) is a classical optimization problem that has been studied throughout rather vast literature over the last century (see, e.g., \cite{BV,Rock70,Luen84,Kuhn76,KuhnTuck51,BenNem01,BerTsi97,Bertsek99} and references therein). Various aspects of these types of problems are often of interest.
For example, within the standard  optimization theory treatments (e.g., \cite{BV,Rock70,Luen84,Kuhn76,KuhnTuck51,BenNem01,BerTsi97,Bertsek99}), when facing problem (\ref{eq:lincons}), most often, the primary goal is simply finding the minimal value of $f(\x)$ (and, if possible the $\x$'s that achieve it). The secondary goal is usually to do so in a computationally as efficient way as possible. If, say,  $f(\x)$ and $\cX$ are such that they allow for the solving process to be done in polynomial time, one usually, from an algorithmic point of view, considers  the above problem as solvable.

Our interest in this paper is slightly different from this classical approach and it relates to a somewhat different aspect of these problems. In particular, we don't necessarily look to find solutions of (\ref{eq:lincons}) or even to find its objective's optimal value. Instead,
we look at a special, random, class of problems from (\ref{eq:lincons}), and consider whether or not their analytical characterization is possible without actually solving them. For small $n$ (say $n=2$ or $n=3$), any given instance of (\ref{eq:lincons}) is typically solvable even fully analytically. We, however, consider large $n$ \emph{linear} regime, i.e. the regime where $\alpha_1=\lim_{n\rightarrow\infty} \frac{m_1}{n}$ and $\alpha_2=\lim_{n\rightarrow\infty} \frac{m_2}{n}$ and both $\alpha_1$ and $\alpha_2$ remain constants as $n$ grows. Solving (\ref{eq:lincons}) analytically in such a regime is in general very hard. Things, however, change if one moves to random mediums and views $A$ and $B$ as random matrices. These scenarios are precisely the ones initially considered in \cite{StojnicRegRndDlt10,StojnicCSetam09,StojnicUpper10,StojnicICASSP10block,StojnicICASSP10var} where the foundational principles of the so-called (\emph{non-lifted}) random duality theory (RDT) were introduced. Relying on these principles,  \cite{StojnicRegRndDlt10,StojnicCSetam09,StojnicUpper10,StojnicICASSP10block,StojnicICASSP10var} proved the existence of the so-called strong (\emph{non-lifted}) random duality for a subclass of problems given in (\ref{eq:lincons}) where the strong deterministic duality holds. This then turned out to be completely sufficient to enable a full analytical characterization of  the behavior of (\ref{eq:lincons}) even without actually solving it.

It soon became clear though that, despite their vast generality, the results  of \cite{StojnicRegRndDlt10,StojnicCSetam09,StojnicUpper10,StojnicICASSP10block,StojnicICASSP10var} might not ultimately be strong enough to handle more generic instances of (\ref{eq:lincons}). A long line of work then followed attempting to  improve on \cite{StojnicRegRndDlt10,StojnicCSetam09,StojnicUpper10,StojnicICASSP10block,StojnicICASSP10var}. In particular,
 \cite{StojnicLiftStrSec13,StojnicGardSphNeg13,StojnicGardSphErr13,StojnicAsymmLittBnds11,StojnicMoreSophHopBnds10} moved things to another level and introduced a \emph{partially lifted} upgrade, as a way of attacking the scenarios where the non-lifted RDT is not at its full power. We here proceed along the same lines and move things even further by establishing \emph{fully lifted} (fl) RDT. To do so,
we first recognize a connection between  the random linearly constrained optimization programs considered here and the recent progress made in studying bilinearly indexed (bli) random processes in \cite{Stojnicnflgscompyx23,Stojnicsflgscompyx23}. Recognition of such a connection allows us to ultimately introduce the fl RDT and its a complete \emph{stationarized} variant, sfl RDT.

In sections that follow, we slowly introduce all the needed ingredients to eventually establish the sfl RDT. Before proceeding with such a presentation, we mention, right here at the beginning, that our exposition does not rely on using any of the algorithmic or analytical results known for any specific (deterministic) instance of (\ref{eq:lincons}). This basically means that the reader is not really required to have any background in the optimization theory beyond a few classical concept that will be rather obvious from the presentation itself. Moreover, as it will become clear throughout the presentation, any such concepts will be fairly general and in no way tailored particularly for the classes of problems that we study here.

\section{Random linearly constrained programs}
\label{sec:randlincons}

As emphasized above, our focus is on random linearly constrained optimization program (\ref{eq:lincons}). To be more specific, we assume that all elements of matrices $A\in\mR^{m_1\times n}$ and $B\in\mR^{m_2\times n}$ are i.i.d. standard normals and for a given set $\cX\subseteq \mR^n$ and function $f(\x)$ consider
\begin{eqnarray}
\xi(f,\cX)=\min_{\x} & & f(\x)\nonumber \\
\mbox{subject to} & & A\x=0\nonumber \\
& & B\x\leq 0 \nonumber \\
& & \x\in\cX. \label{eq:randlincons1}
\end{eqnarray}
To facilitate writing, we assume throughout the paper that the objective of the above program exists and is either deterministically bounded (i.e., bounded for any particular realization of $A$ and $B$) or that it is bounded with overwhelming probability over the randomness of $A$ and $B$ (under overwhelming probability, we always assume a probability that goes to $1$ as $n\rightarrow \infty$). Moreover, to make the presentation less cumbersome, we also assume boundedness (again, either in deterministic or random sense) of all the functions of all the optimizing quantities that appear in the derivations below. With these assumptions in place, we proceed by transforming (\ref{eq:randlincons1})
\begin{eqnarray}
\xi(f,\cX) & = & \min_{\x\in\cX}\max_{\lambda\geq 0,\nu} \lp f(\x)+\nu^TA\x+\lambda^TB\x \rp
 = \min_{\x\in\cX}\max_{\lambda\geq 0,\nu} \lp f(\x)+\begin{bmatrix}\nu^T \lambda^T\end{bmatrix}\begin{bmatrix} A \\ B\end{bmatrix}\x \rp \nonumber \\
 & = & \min_{\x\in\cX}\max_{\y\in\cY} \lp f(\x)+\y^T G\x  \rp, \label{eq:randlincons2}
\end{eqnarray}
where $\nu\in\mR^{m_1},$ and $\lambda\in\mR^m_2,$ are column vectors, $G\in\mR^{m\times n},$ is a matrix comprised of i.i.d. standard normals, $m=m_1+m_2,$ and $\cY\subseteq \mR^m$ is comprised of  $\y\in\mR^m$ such that $\y_i\geq 0,m_1+1\leq i\leq m_1+m_2$. After recognizing that $\y^TG\x$ is a bli process, we are in position to utilize recent results \cite{Stojnicnflgscompyx23,Stojnicsflgscompyx23} (related to such processes) to characterize $\xi(f,\cX)$.

\subsection{Fully lifted interpolation}
\label{sec:lbrandlincons}

To be able to effectively use the results of \cite{Stojnicnflgscompyx23,Stojnicsflgscompyx23}, we first, for $r\in\mN$ and $k\in\{1,2,\dots,r+1\}$,  introduce function
\begin{equation}\label{eq:thm3eq1}
\psi(f,\calX,\calY,\p,\q,\m,\beta,s,t)  =  \mE_{G,{\mathcal U}_{r+1}} \frac{1}{\beta|s|\sqrt{n}\m_r} \log
\lp \mE_{{\mathcal U}_{r}} \lp \dots \lp \mE_{{\mathcal U}_2}\lp\lp\mE_{{\mathcal U}_1} \lp Z^{\m_1}\rp\rp^{\frac{\m_2}{\m_1}}\rp\rp^{\frac{\m_3}{\m_2}} \dots \rp^{\frac{\m_{r}}{\m_{r-1}}}\rp,
\end{equation}
where $\cX=\{\x^{(1)},\x^{(2)},\dots,\x^{(l)}\}$ and $\cY=\{\y^{(1)},\y^{(2)},\dots,\y^{(l)}\}$ for some $l\in\mN$ and where
\begin{eqnarray}\label{eq:thm3eq2}
Z & \triangleq & \sum_{i_1=1}^{l}\lp\sum_{i_2=1}^{l}e^{\beta D_0^{(i_1,i_2)}} \rp^{s} \nonumber \\
 D_0^{(i_1,i_2)} & \triangleq & f(\x^{(i_1)}) + \sqrt{t}(\y^{(i_2)})^T
 G\x^{(i_1)}+\sqrt{1-t}\|\x^{(i_1)}\|_2 (\y^{(i_2)})^T\lp\sum_{k=1}^{r+1}b_k\u^{(2,k)}\rp\nonumber \\
 & & +\sqrt{t}\|\x^{(i_1)}\|_2\|\y^{(i_2)}\|_2\lp\sum_{k=1}^{r+1}a_ku^{(4,k)}\rp +\sqrt{1-t}\|\y^{(i_2)}\|_2\lp\sum_{k=1}^{r+1}c_k\h^{(k)}\rp^T\x^{(i_1)},
\end{eqnarray}
and
\begin{eqnarray} \label{eq:thm3eq2a00}
 a_k & \triangleq \hspace{.1in}  a_k(\p,\q) &  \triangleq  \hspace{.1in}  \sqrt{\p_{k-1}\q_{k-1}-\p_k\q_k} \nonumber \\
 b_k & \triangleq \hspace{.1in}   b_k(\p,\q) &  \triangleq  \hspace{.1in} \sqrt{\p_{k-1}-\p_k} \nonumber \\
 c_k & \triangleq \hspace{.1in}  c_k(\p,\q) & \triangleq  \hspace{.1in}  \sqrt{\q_{k-1}-\q_k},
 \end{eqnarray}
with vectors $\m=[\m_0,\m_1,\m_2,...,\m_r,\m_{r+1}]$, $\p=[\p_0,\p_1,...,\p_r,\p_{r+1}]$, and $\q=[\q_0,\q_1,\q_2,\dots,\q_r,\q_{r+1}]$ being such that $\m_0=1$, $\m_{r+1}=0$,
 \begin{eqnarray}\label{eq:thm3eq2a0}
1\geq\p_0\geq \p_1\geq \p_2\geq \dots \geq \p_r\geq \p_{r+1} & = & 0 \nonumber \\
1\geq\q_0\geq \q_1\geq \q_2\geq \dots \geq \q_r\geq \q_{r+1} & = &  0,
 \end{eqnarray}
and ${\mathcal U}_k\triangleq [u^{(4,k)},\u^{(2,k)},\h^{(k)}]$ being such that the components of  $u^{(4,k)}\in\mR$, $\u^{(2,k)}\in\mR^m$, and $\h^{(k)}\in\mR^n$ are i.i.d. standard normals.

The above function $\psi(\cdot)$ is in fact directly connected to $\xi(f,\cX)$. For example, if say $a_k=0$, then
$\mE_{A,B}\xi(f,\cX)=\mE_{G}\xi(f,\cX)=-\sqrt{n}\lim_{\beta\rightarrow\infty} \psi(f,\calX,\calY,\p,\q,\m,\beta,-1,1)= -\sqrt{n}\psi(f,\calX,\calY,\p,\q,\m,\infty,-1,1)$. While, in the contexts of our interest in this paper, one has that generically $a_k\neq0$, it is still fairly clear that the connection is rather strong and that studying and understanding some of the properties of $\psi(\cdot)$ is likely to be very useful. The following fundamental result from \cite{Stojnicnflgscompyx23} is of key interest in that regard.
\begin{theorem}(\cite{Stojnicnflgscompyx23})
\label{thm:thm4}
 Let
  \begin{eqnarray}\label{eq:rthlev2genanal7a}
\zeta_r\triangleq \mE_{{\mathcal U}_{r}} \lp \dots \lp \mE_{{\mathcal U}_2}\lp\lp\mE_{{\mathcal U}_1} \lp Z^{\frac{\m_1}{\m_0}}\rp\rp^{\frac{\m_2}{\m_1}}\rp\rp^{\frac{\m_3}{\m_2}} \dots \rp^{\frac{\m_{r}}{\m_{r-1}}}, r\geq 1.
\end{eqnarray}
One then has
\begin{eqnarray}\label{eq:rthlev2genanal7b}
\zeta_k = \mE_{{\mathcal U}_{k}} \lp  \zeta_{k-1} \rp^{\frac{\m_{k}}{\m_{k-1}}}, k\geq 2,\quad \mbox{and} \quad
\zeta_1=\mE_{{\mathcal U}_1} \lp Z^{\frac{\m_1}{\m_0}}\rp,
\end{eqnarray}
with $\zeta_0=Z$ set for the completeness.  Moreover, consider the operators
\begin{eqnarray}\label{eq:thm3eq3}
 \Phi_{{\mathcal U}_k} & \triangleq &  \mE_{{\mathcal U}_{k}} \frac{\zeta_{k-1}^{\frac{\m_k}{\m_{k-1}}}}{\zeta_k},
 \end{eqnarray}
and set
\begin{eqnarray}\label{eq:thm3eq4}
  \gamma_0(i_1,i_2) & = &
\frac{(C^{(i_1)})^{s}}{Z}  \frac{A^{(i_1,i_2)}}{C^{(i_1)}} \nonumber \\
\gamma_{01}^{(r)}  & = & \prod_{k=r}^{1}\Phi_{{\mathcal U}_k} (\gamma_0(i_1,i_2)) \nonumber \\
\gamma_{02}^{(r)}  & = & \prod_{k=r}^{1}\Phi_{{\mathcal U}_k} (\gamma_0(i_1,i_2)\times \gamma_0(i_1,p_2)) \nonumber \\
\gamma_{k_1+1}^{(r)}  & = & \prod_{k=r}^{k_1+1}\Phi_{{\mathcal U}_k} \lp \prod_{k=k_1}^{1}\Phi_{{\mathcal U}_k}\gamma_0(i_1,i_2)\times \prod_{k=k_1}^{1} \Phi_{{\mathcal U}_k}\gamma_0(p_1,p_2) \rp.
 \end{eqnarray}
Let also
\begin{eqnarray}\label{eq:thm3eq5}
 \phi_{k_1}^{(r)} & = &
-s(\m_{k_1-1}-\m_{k_1}) \nonumber \\
&  & \times
\mE_{G,{\mathcal U}_{r+1}} \langle (\p_{k_1-1}\|\x^{(i_1)}\|_2\|\x^{(p_1)}\|_2 -(\x^{(p_1)})^T\x^{(i_1)})(\q_{k_1-1}\|\y^{(i_2)}\|_2\|\y^{(p_2)}\|_2 -(\y^{(p_2)})^T\y^{(i_2)})\rangle_{\gamma_{k_1}^{(r)}} \nonumber \\
 \phi_{01}^{(r)} & = & (1-\p_0)(1-\q_0)\mE_{G,{\mathcal U}_{r+1}}\langle \|\x^{(i_1)}\|_2^2\|\y^{(i_2)}\|_2^2\rangle_{\gamma_{01}^{(r)}} \nonumber\\
\phi_{02}^{(r)} & = & (s-1)(1-\p_0)\mE_{G,{\mathcal U}_{r+1}}\left\langle \|\x^{(i_1)}\|_2^2 \lp\q_0\|\y^{(i_2)}\|_2\|\y^{(p_2)}\|_2-(\y^{(p_2)})^T\y^{(i_2)}\rp\right\rangle_{\gamma_{02}^{(r)}}. \end{eqnarray}
Then
\begin{eqnarray}\label{eq:thm3eq6}
\frac{d\psi(f,\calX,\calY,\p,\q,\m,\beta,s,t)}{dt}  & = &       \frac{\mbox{sign}(s)\beta}{2\sqrt{n}} \lp  \lp\sum_{k_1=1}^{r+1} \phi_{k_1}^{(r)}\rp +\phi_{01}^{(r)}+\phi_{02}^{(r)}\rp.
 \end{eqnarray}
It particular, choosing $\p_0=\q_0=1$, one also has
\begin{eqnarray}\label{eq:rthlev2genanal43}
\frac{d\psi(f,\calX,\calY,\p,\q,\m,\beta,s,t)}{dt}  & = &       \frac{\mbox{sign}(s)\beta}{2\sqrt{n}} \sum_{k_1=1}^{r+1} \phi_{k_1}^{(r)} .
 \end{eqnarray}
 \end{theorem}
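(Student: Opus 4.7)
My plan is a Gaussian interpolation argument adapted to the $r$-level lifted nested-expectation structure. I would write $\psi = \frac{1}{\beta|s|\sqrt{n}\m_r}\mE_{G,{\mathcal U}_{r+1}}\log\zeta_r$ and differentiate under the outer expectation. Using the recursion $\zeta_k = \mE_{{\mathcal U}_k}\zeta_{k-1}^{\m_k/\m_{k-1}}$ and the chain rule, one gets $\frac{1}{\zeta_k}\frac{d\zeta_k}{dt} = \frac{\m_k}{\m_{k-1}}\Phi_{{\mathcal U}_k}\lp\frac{1}{\zeta_{k-1}}\frac{d\zeta_{k-1}}{dt}\rp$, so iterating from $k=r$ down to $k=1$ yields a nested composition of the operators $\Phi_{{\mathcal U}_k}$ of (\ref{eq:thm3eq3}) acting on $\frac{1}{Z}\frac{dZ}{dt}$, together with a telescoping prefactor $\prod_{k=1}^{r}(\m_k/\m_{k-1}) = \m_r/\m_0 = \m_r$ that cancels the $1/\m_r$ in front of $\psi$. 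At the base, expanding $Z$ through $C^{(i_1)} = \sum_{i_2}e^{\beta D_0^{(i_1,i_2)}}$ gives $\frac{1}{Z}\frac{dZ}{dt} = s\beta\sum_{i_1,i_2}\gamma_0(i_1,i_2)\frac{dD_0^{(i_1,i_2)}}{dt}$, which supplies the $s$ and $\beta$ factors; the quotient $s/|s| = \mbox{sign}(s)$ then combines with the $\frac{1}{2\sqrt{t}}$ from differentiating the $\sqrt{t}$ prefactors to produce the constant $\mbox{sign}(s)\beta/(2\sqrt{n})$ in (\ref{eq:thm3eq6}).

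The next step is to process $\frac{dD_0^{(i_1,i_2)}}{dt}$ term by term: $\sqrt{t}$ contributes $\frac{1}{2\sqrt{t}}\lp(\y^{(i_2)})^TG\x^{(i_1)}+\|\x^{(i_1)}\|_2\|\y^{(i_2)}\|_2\sum_k a_k u^{(4,k)}\rp$, while $\sqrt{1-t}$ contributes the analogous $-\frac{1}{2\sqrt{1-t}}$ times the $b_k\u^{(2,k)}$ and $c_k\h^{(k)}$ pieces. I would apply Gaussian integration by parts (Stein's lemma) separately to each of the four independent Gaussian families $G$, $u^{(4,k)}$, $\u^{(2,k)}$, $\h^{(k)}$. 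Each such variable appears linearly in $D_0^{(i_1,i_2)}$ with exactly a $\sqrt{t}$ or $\sqrt{1-t}$ factor, so the $1/(2\sqrt{t})$ and $1/(2\sqrt{1-t})$ prefactors cancel cleanly, leaving bilinear replica-pair expressions. The specific choices of $a_k,b_k,c_k$ in (\ref{eq:thm3eq2a00}) are engineered so that, at each level $k_1$, the surviving combinations consolidate into the overlap differences $\p_{k_1-1}\|\x^{(i_1)}\|_2\|\x^{(p_1)}\|_2-(\x^{(p_1)})^T\x^{(i_1)}$ and $\q_{k_1-1}\|\y^{(i_2)}\|_2\|\y^{(p_2)}\|_2-(\y^{(p_2)})^T\y^{(i_2)}$, with the intermediate $\p_k\q_k$ and $\p_k,\q_k$ terms telescoping across adjacent levels.

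I would then organize the output of the by-parts action by the level $k_1$ at which the Gibbs reweighting is differentiated: striking $\Phi_{{\mathcal U}_{k_1}}$ brings down the factor $\m_{k_1}/\m_{k_1-1}-1$, which times the base $s$ produces the $-s(\m_{k_1-1}-\m_{k_1})$ prefactor of $\phi_{k_1}^{(r)}$, while the replica pair $(i_1,i_2),(p_1,p_2)$ is coupled through the weight $\gamma_{k_1}^{(r)}$ of (\ref{eq:thm3eq4}). The boundary contributions $\phi_{01}^{(r)}$ and $\phi_{02}^{(r)}$ correspond to the outermost $\p_0,\q_0$ mismatch: $\phi_{01}^{(r)}$ picks up $(1-\p_0)(1-\q_0)$ from the self-self pairing of $G$ against itself once $\sum_{k=1}^{r+1}b_k^2=\p_0$ and $\sum_{k=1}^{r+1}c_k^2=\q_0$ are used, and $\phi_{02}^{(r)}$ carries $(s-1)(1-\p_0)$ because the $s$-th power in $(C^{(i_1)})^s$ yields an $s-1$ factor in the asymmetric $i_1=p_1$, $i_2\neq p_2$ contribution. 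Choosing $\p_0=\q_0=1$ kills both boundary terms and delivers (\ref{eq:rthlev2genanal43}). The main obstacle is the combinatorial bookkeeping—four Gaussian families, $r+1$ nested Gibbs operators, and the need to verify that cross-level cancellations produce exactly the claimed telescoping; once the identities $\sum_k a_k^2=\p_0\q_0$, $\sum_k b_k^2=\p_0$, $\sum_k c_k^2=\q_0$ are used to route each piece of the by-parts output to the correct level $k_1$, the identification with the $\phi_{k_1}^{(r)}$ through the weights $\gamma_{k_1}^{(r)}$ is mechanical.
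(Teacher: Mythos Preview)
Your sketch is correct and matches the expected approach: the paper itself does not supply a self-contained proof here but simply states that the result follows, after trivial modifications, from the proof in \cite{Stojnicnflgscompyx23}. What you outline---differentiating $\log\zeta_r$ through the recursion $\zeta_k=\mE_{{\mathcal U}_k}\zeta_{k-1}^{\m_k/\m_{k-1}}$ to pick up the nested operators $\Phi_{{\mathcal U}_k}$ and the telescoping factor $\m_r$, then applying Gaussian integration by parts to each of the four families $G,u^{(4,k)},\u^{(2,k)},\h^{(k)}$ and exploiting the definitions $a_k^2=\p_{k-1}\q_{k-1}-\p_k\q_k$, $b_k^2=\p_{k-1}-\p_k$, $c_k^2=\q_{k-1}-\q_k$ to organize the replica-pair overlaps by level---is precisely the mechanism behind the cited result, with the only ``trivial modification'' relative to \cite{Stojnicnflgscompyx23} being the presence of the additive $f(\x^{(i_1)})$ term in $D_0^{(i_1,i_2)}$, which is $t$-independent and drops out upon differentiation.
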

\begin{proof}
Follows immediately after trivial modifications of the proof presented in \cite{Stojnicnflgscompyx23}.
\end{proof}
Now, given that
\begin{eqnarray}\label{eq:fl2}
\psi(f,\calX,\calY,\p,\q,\m,\beta,s,1)=\psi(f,\calX,\calY,\p,\q,\m,\beta,s,0)+\int_{0}^{1}\frac{d\psi(f,\calX,\calY,\p,\q,\m,\beta,s,t)}{dt}dt,
\end{eqnarray}
the result of Theorem \ref{thm:thm4} extends the above mentioned connection between $\xi(f,\cX)$ and $\psi(f,\calX,\calY,\p,\q,\m,\infty,-1,1)$ so that now $\xi(f,\cX)$ is directly connected to  $\psi(f,\calX,\calY,\p,\q,\m,\infty,-1,0)$ as well. Still, two obstacles remain to be able to fully utilize this connection: i) handling the above mentioned $a_k\neq0$ scenario; and ii) the integration in (\ref{eq:fl2}). We discuss both of them in the next section.

\subsection{Stationarization}
\label{sec:stationarization}

Let $\psi_1$ be the following function
\begin{eqnarray}\label{eq:saip1}
\psi_1(f,\calX,\calY,\p,\q,\m,\beta,s,t) & = & -\frac{\mbox{sign}(s) s \beta}{2\sqrt{n}} \sum_{k=1}^{r+1}\Bigg(\Bigg. \p_{k-1}\q_{k-1}\mE_{G,{\mathcal U}_{r+1}} \langle\|\x^{(i_1)}\|_2\|\x^{(p_1)}\|_2\|\y^{(i_2)}\|_2\|\y^{(p_2)}\|_2\rangle_{\gamma_{k}^{(r)}}\nonumber \\
& & -\p_{k}\q_{k}\mE_{G,{\mathcal U}_{r+1}} \langle\|\x^{(i_1)}\|_2\|\x^{(p_1)}\|_2\|\y^{(i_2)}\|_2\|\y^{(p_2)}\|_2\rangle_{\gamma_{k+1}^{(r)}}\Bigg.\Bigg)
\m_{k} \nonumber \\
& & +\psi(f,\calX,\calY,\p,\q,\m,\beta,s,t),
 \end{eqnarray}
and consider the following system of equations
\begin{eqnarray}\label{eq:saip4}
\frac{d\psi_1(f,\calX,\calY,\p,\q,\m,\beta,s,t)}{d\p_{k_1}}
& = & 0, \nonumber \\
\frac{d\psi_1(f,\calX,\calY,\p,\q,\m,\beta,s,t)}{d\q_{k_1}}
& = & 0, \nonumber \\
\frac{d\psi_1(f,\calX,\calY,\p,\q,\m,\beta,s,t)}{d\m_{k_1}}
 & = & 0,
  \end{eqnarray}
where $k_1\in\{1,2,\dots,r\}$ and $\p_0(t)=\q_0(t)=\m_0(t)=1$. The following theorem is critically important.

\begin{theorem}(\cite{Stojnicsflgscompyx23})
\label{thm:thm5}
Assume completely stationirized  fully lifted random duality theory frame (\textbf{\emph{complete sfl RDT frame}}) of \cite{Stojnicsflgscompyx23} with $n\rightarrow\infty$ and $\bar{\p}(t)$, $\bar{\q}(t)$, and $\bar{\m}(t)$ as solutions of system (\ref{eq:saip4}). For $\bar{\p}_0(t)=\bar{\q}_0(t)=1$,
$\bar{\p}_{r+1}(t)=\bar{\q}_{r+1}(t)=\bar{\m}_{r+1}(t)=0$, and $\m_1(t)\rightarrow\m_0(t)=1$ one then has that
$\frac{d\psi_1(f,\calX,\calY,\bar{\p}(t),\bar{\q}(t),\bar{\m}(t),\beta,s,t)}{dt}   =   0$ and
\begin{eqnarray}\label{eq:thm5eq0}
 \lim_{n\rightarrow\infty}\psi_1(f,\calX,\calY,\bar{\p}(t),\bar{\q}(t),\bar{\m}(t),\beta,s,t)
& = &\lim_{n\rightarrow\infty}\psi_1(f,\calX,\calY,\bar{\p}(0),\bar{\q}(0),\bar{\m}(0),\beta,s,0) \nonumber \\
 & = & \lim_{n\rightarrow\infty}\psi_1(f,\calX,\calY,\bar{\p}(1),\bar{\q}(1),\bar{\m}(1),\beta,s,1).
 \end{eqnarray}
\end{theorem}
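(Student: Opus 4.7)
The plan is to differentiate $\psi_1$ along the stationary curve $(\bar\p(t),\bar\q(t),\bar\m(t))$, eliminate the $\p,\q,\m$-partials via the stationarity system (\ref{eq:saip4}), invoke Theorem \ref{thm:thm4} for the remaining explicit $t$-partial, and close with an algebraic cancellation that is again supplied by (\ref{eq:saip4}).

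First I would apply the chain rule,
\begin{equation*}
\frac{d\psi_1}{dt} = \frac{\partial\psi_1}{\partial t} + \sum_{k_1=1}^{r}\lp\frac{\partial\psi_1}{\partial\p_{k_1}}\dot{\bar\p}_{k_1}+\frac{\partial\psi_1}{\partial\q_{k_1}}\dot{\bar\q}_{k_1}+\frac{\partial\psi_1}{\partial\m_{k_1}}\dot{\bar\m}_{k_1}\rp,
\end{equation*}
noting that the boundary indices $k_1\in\{0,r+1\}$ are frozen by the stated conditions $\bar\p_0=\bar\q_0=1$ and $\bar\p_{r+1}=\bar\q_{r+1}=\bar\m_{r+1}=0$, while the interior summands vanish by (\ref{eq:saip4}). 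This reduces the first claim to proving $\partial_t\psi_1=0$ along the stationary path. Because the correction added to $\psi$ in the definition (\ref{eq:saip1}) of $\psi_1$ depends on $t$ only through $\p,\q,\m$, one has $\partial_t\psi_1=\partial_t\psi$, and since $\bar\p_0=\bar\q_0=1$ the simplified identity (\ref{eq:rthlev2genanal43}) of Theorem \ref{thm:thm4} gives
\begin{equation*}
\frac{\partial\psi_1}{\partial t} = \frac{\mbox{sign}(s)\beta}{2\sqrt n}\sum_{k_1=1}^{r+1}\phi_{k_1}^{(r)}.
\end{equation*}

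Next I would expand each $\phi_{k_1}^{(r)}$ into its four $\gamma_{k_1}^{(r)}$-averaged overlap products (the diagonal $\|\x\|\|\x\|\|\y\|\|\y\|$ piece, the two half-diagonal mixed pieces, and the fully bilinear $(\x^{(p_1)})^T\x^{(i_1)}(\y^{(p_2)})^T\y^{(i_2)}$ piece), and match them against the three scalar stationarity relations at level $k_1$ coming from (\ref{eq:saip4}). The correction in (\ref{eq:saip1}) is engineered exactly so that, when $\partial_{\p_{k_1}}\psi_1$, $\partial_{\q_{k_1}}\psi_1$ and $\partial_{\m_{k_1}}\psi_1$ are written in closed form using the recursive $\zeta_k$-differentiation rules underlying Theorem \ref{thm:thm4}, the three resulting vanishing conditions collectively force each $\phi_{k_1}^{(r)}$ to zero. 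The degenerate level $k_1=1$ is absorbed by the prefactor $\bar\m_0-\bar\m_1\to 0$ coming from the limit $\m_1(t)\to\m_0(t)=1$, and the level $k_1=r+1$ is absorbed by $\bar\p_{r+1}=\bar\q_{r+1}=0$.

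Combining these steps yields $\frac{d\psi_1}{dt}\equiv 0$ along the stationary curve, and the equalities in (\ref{eq:thm5eq0}) then follow from the fundamental theorem of calculus upon integrating over $t\in[0,1]$ in the $n\to\infty$ limit. The main obstacle is the explicit matching in the third step: one has to compute the three partials in closed form, bookkeep which Gibbs-measure average (with respect to which $\gamma_{k_1}^{(r)}$-measure, with which overlap substituted) corresponds to which component of $\phi_{k_1}^{(r)}$, and verify the term-by-term cancellation. This is exactly the lengthy algebra that the construction (\ref{eq:saip1}) was tailored to make transparent.
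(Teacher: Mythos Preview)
The paper does not supply its own proof here; Theorem~\ref{thm:thm5} is quoted from \cite{Stojnicsflgscompyx23}, so there is no in-paper argument to compare against. Evaluating your proposal on its own merits:

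Your overall architecture (chain rule along the stationary curve, kill the $\p,\q,\m$-partials by (\ref{eq:saip4}), then control the residual explicit $t$-partial) is the right one. However, there is a concrete gap at the point where you write ``the correction added to $\psi$ in the definition (\ref{eq:saip1}) of $\psi_1$ depends on $t$ only through $\p,\q,\m$, one has $\partial_t\psi_1=\partial_t\psi$.'' This is not true. The correction term carries the averages $\langle\,\cdot\,\rangle_{\gamma_k^{(r)}}$, and the measures $\gamma_k^{(r)}$ from (\ref{eq:thm3eq4}) are built out of $\zeta_k$ and $Z$, which in turn are built from $D_0^{(i_1,i_2)}$ in (\ref{eq:thm3eq2}). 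That object has explicit $\sqrt{t}$ and $\sqrt{1-t}$ prefactors. Hence the correction has genuine explicit $t$-dependence through the interpolating Gibbs weights, and $\partial_t\psi_1$ picks up contributions beyond $\partial_t\psi=\frac{\mathrm{sign}(s)\beta}{2\sqrt n}\sum_{k_1}\phi_{k_1}^{(r)}$.

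Consequently, even if your third-step matching succeeded in showing that the stationarity relations force $\sum_{k_1}\phi_{k_1}^{(r)}=0$, that would only yield $\partial_t\psi=0$, not $\partial_t\psi_1=0$. The actual mechanism in \cite{Stojnicsflgscompyx23} must simultaneously track the $t$-derivative of the $\gamma_k^{(r)}$-averaged norm products in (\ref{eq:saip1}) and show that, together with the $\phi_{k_1}^{(r)}$ terms and the stationarity equations, the combined expression cancels. Your write-up needs to either (i) differentiate the correction in $t$ explicitly (this produces further overlap moments under the same $\gamma$-measures, via the same Gaussian-integration-by-parts machinery that underlies Theorem~\ref{thm:thm4}) and fold those into the cancellation, or (ii) argue within the ``complete sfl RDT frame'' hypothesis that a self-consistency of the $\gamma$-measures along the stationary path neutralizes this extra dependence. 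As written, the step $\partial_t\psi_1=\partial_t\psi$ is unjustified and the subsequent cancellation argument is incomplete.
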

Finally, the operational version of the above theorem is the following practically relevant corollary.

\begin{corollary}(\cite{Stojnicsflgscompyx23})
\label{thm:thm6}
Assume the setup of Theorem \ref{thm:thm5}. Then
\begin{align}\label{eq:thm6eq1}
\lim_{n\rightarrow\infty}\psi_1(f,\calX,\calY,\bar{\p}(0),\bar{\q}(0),\bar{\m}(0),\beta,s,0)
& =  \lim_{n\rightarrow\infty}  \Bigg( \Bigg. -\frac{\mbox{sign}(s) s \beta}{2\sqrt{n}}
\sum_{k=1}^{r+1}\Bigg(\Bigg. \bar{\p}_{k-1}(0)\bar{\q}_{k-1}(0)
\nonumber \\
&  \times
\mE_{G,{\mathcal U}_{r+1}} \langle\|\x^{(i_1)}\|_2\|\x^{(p_1)}\|_2\|\y^{(i_2)}\|_2\|\y^{(p_2)}\|_2\rangle_{\gamma_{k}^{(r)}} \nonumber \\
&   -\bar{\p}_{k}(0)\bar{\q}_{k}(0)
 \mE_{G,{\mathcal U}_{r+1}} \langle\|\x^{(i_1)}\|_2\|\x^{(p_1)}\|_2\|\y^{(i_2)}\|_2\|\y^{(p_2)}\|_2\rangle_{\gamma_{k+1}^{(r)}}
   \Bigg.\Bigg)
\bar{\m}_k(0) \nonumber \\
&   + \psi(f,\calX,\calY,\bar{\p}(0),\bar{\q}(0),\bar{\m}(0),\beta,s,0) \Bigg.\Bigg),
\end{align}
with $\gamma$'s evaluated at $t=0$, and
\begin{align}\label{eq:thm6eq1a1}
\lim_{n\rightarrow\infty}\psi_1(f,\calX,\calY,\bar{\p}(1),\bar{\q}(1),\bar{\m}(1),\beta,s,1)
& =  \lim_{n\rightarrow\infty}  \Bigg( \Bigg. -\frac{\mbox{sign}(s) s \beta}{2\sqrt{n}}
\sum_{k=1}^{r+1}\Bigg(\Bigg. \bar{\p}_{k-1}(1)\bar{\q}_{k-1}(1)
\nonumber \\
&  \times
\mE_{G,{\mathcal U}_{r+1}} \langle\|\x^{(i_1)}\|_2\|\x^{(p_1)}\|_2\|\y^{(i_2)}\|_2\|\y^{(p_2)}\|_2\rangle_{\gamma_{k}^{(r)}} \nonumber \\
&   -\bar{\p}_{k}(1)\bar{\q}_{k}(1)
 \mE_{G,{\mathcal U}_{r+1}} \langle\|\x^{(i_1)}\|_2\|\x^{(p_1)}\|_2\|\y^{(i_2)}\|_2\|\y^{(p_2)}\|_2\rangle_{\gamma_{k+1}^{(r)}}
   \Bigg.\Bigg)
\bar{\m}_k(1) \nonumber \\
&   +  \psi(f,\calX,\calY,\bar{\p}(1),\bar{\q}(1),\bar{\m}(1),\beta,s,1) \Bigg.\Bigg),
  \end{align}
with $\gamma$'s evaluated at $t=1$. Moreover, let
 \begin{equation}\label{eq:thm6eq2}
\psi_S(f,\calX,\calY,\p,\q,\m,\beta,s,t)  =  \mE_{G,{\mathcal U}_{r+1}} \frac{1}{\beta|s|\sqrt{n}\m_r} \log
\lp \mE_{{\mathcal U}_{r}} \lp \dots \lp \mE_{{\mathcal U}_2}\lp\lp\mE_{{\mathcal U}_1} \lp Z_S^{\m_1}\rp\rp^{\frac{\m_2}{\m_1}}\rp\rp^{\frac{\m_3}{\m_2}} \dots \rp^{\frac{\m_{r}}{\m_{r-1}}}\rp,
\end{equation}
where,  analogously to (\ref{eq:thm3eq1}) and (\ref{eq:thm3eq2}),
\begin{equation}\label{eq:thm6eq3}
Z_S  \triangleq  \sum_{i_1=1}^{l}\lp\sum_{i_2=1}^{l}e^{\beta D_{0,S}^{(i_1,i_2)}} \rp^{s},
\end{equation}
and
\begin{eqnarray}\label{eq:thm6eq4}
 D_{0,S}^{(i_1,i_2)} &  \triangleq & f(\x^{(i_1)}) + \sqrt{t}(\y^{(i_2)})^T
 G\x^{(i_1)}+\sqrt{1-t}\|\x^{(i_1)}\|_2 (\y^{(i_2)})^T\lp\sum_{k=1}^{r+1}b_k\u^{(2,k)}\rp  \nonumber \\
  & &  +\sqrt{1-t}\|\y^{(i_2)}\|_2\lp\sum_{k=1}^{r+1}c_k\h^{(k)}\rp^T\x^{(i_1)}.
 \end{eqnarray}
Then
\begin{align}\label{eq:thm6eq5}
 \lim_{n\rightarrow\infty} \psi_S(f,\calX,\calY,\bar{\p}(1),\bar{\q}(1),\bar{\m}(1),\beta,s,1) & =
 \lim_{n\rightarrow\infty}\Bigg(\Bigg.
 -\frac{\mbox{sign}(s) s \beta}{2\sqrt{n}} \sum_{k=1}^{r+1}\Bigg(\Bigg. \bar{\p}_{k-1}(0)\bar{\q}_{k-1}(0)
 \nonumber \\
&  \times
\mE_{G,{\mathcal U}_{r+1}} \langle\|\x^{(i_1)}\|_2\|\x^{(p_1)}\|_2\|\y^{(i_2)}\|_2\|\y^{(p_2)}\|_2\rangle_{\gamma_{k}^{(r)}} \nonumber \\
&  -\bar{\p}_{k}(0)\bar{\q}_{k}(0)
\mE_{G,{\mathcal U}_{r+1}} \langle\|\x^{(i_1)}\|_2\|\x^{(p_1)}\|_2\|\y^{(i_2)}\|_2\|\y^{(p_2)}\|_2\rangle_{\gamma_{k+1}^{(r)}}  \Bigg.\Bigg)
\bar{\m}_k(0)
 \nonumber \\
&   + \psi_S(f,\calX,\calY,\bar{\p}(0),\bar{\q}(0),\bar{\m}(0),\beta,s,0) \Bigg.\Bigg), \nonumber \\
 \end{align}
with $\gamma$'s evaluated at $t=0$.
\end{corollary}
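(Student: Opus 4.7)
The plan is to dispatch (\ref{eq:thm6eq1}) and (\ref{eq:thm6eq1a1}) by direct substitution, and to derive (\ref{eq:thm6eq5}) by chaining them through the stationary-path $t$-invariance supplied by Theorem~\ref{thm:thm5}. The first two equalities are immediate: each right-hand side is exactly the definition of $\psi_1$ from (\ref{eq:saip1}) evaluated, along the stationary path $(\bar{\p}(t),\bar{\q}(t),\bar{\m}(t))$, at $t=0$ and $t=1$ respectively, so there is nothing to prove beyond unrolling the definition.

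For (\ref{eq:thm6eq5}), the first step is to observe that at $t=0$ the factor $\sqrt{t}$ premultiplying the scalar Gaussian block $\|\x^{(i_1)}\|_2\|\y^{(i_2)}\|_2\sum_{k=1}^{r+1}a_k u^{(4,k)}$ in $D_0^{(i_1,i_2)}$ (see (\ref{eq:thm3eq2})) vanishes, forcing $D_{0,S}^{(i_1,i_2)}|_{t=0}=D_0^{(i_1,i_2)}|_{t=0}$ and hence $\psi_S(\cdot,0)=\psi(\cdot,0)$ irrespective of $(\p,\q,\m)$. With that substitution, the right-hand side of (\ref{eq:thm6eq5}) is termwise identical to the right-hand side of (\ref{eq:thm6eq1}), hence equal to $\lim_{n\to\infty}\psi_1(f,\calX,\calY,\bar{\p}(0),\bar{\q}(0),\bar{\m}(0),\beta,s,0)$. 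Invoking Theorem~\ref{thm:thm5} transports this value to $t=1$ along the stationary path, and (\ref{eq:thm6eq1a1}) rewrites it as $\mathrm{sum}(1)+\lim_{n\to\infty}\psi(f,\calX,\calY,\bar{\p}(1),\bar{\q}(1),\bar{\m}(1),\beta,s,1)$, where $\mathrm{sum}(1)$ denotes the $(\bar{\p}_{k-1}(1)\bar{\q}_{k-1}(1)-\bar{\p}_k(1)\bar{\q}_k(1))\bar{\m}_k(1)$ expression on the right of (\ref{eq:thm6eq1a1}).

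What remains, and constitutes the only nontrivial step, is the stationary-point identity $\psi_S(\cdot,1)-\psi(\cdot,1)=\mathrm{sum}(1)$; equivalently, the multi-level contribution of the scalar Gaussians $u^{(4,k)}$ (present in $D_0$ at $t=1$ but absent from $D_{0,S}$) must be captured exactly by the telescoped sum. I would extract this from a dedicated Gaussian interpolation: rescale the $\|\x^{(i_1)}\|_2\|\y^{(i_2)}\|_2\sum_k a_k u^{(4,k)}$ block by $\sqrt{\tau}$ for $\tau\in[0,1]$, so that $\tau=1$ recovers $\psi(\cdot,1)$ and $\tau=0$ recovers $\psi_S(\cdot,1)$, and then differentiate in $\tau$ and apply Stein's lemma level-by-level inside the nested tower $\prod_{k=1}^{r}\Phi_{{\mathcal U}_k}$ from (\ref{eq:thm3eq3}). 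Since $u^{(4,k)}$ couples two copies $(i_1,i_2)$ and $(p_1,p_2)$ only through the norm product $\|\x^{(i_1)}\|_2\|\y^{(i_2)}\|_2\|\x^{(p_1)}\|_2\|\y^{(p_2)}\|_2$ with variance coefficient $a_k^2=\bar{\p}_{k-1}(1)\bar{\q}_{k-1}(1)-\bar{\p}_k(1)\bar{\q}_k(1)$, the level-$k$ integration by parts delivers precisely the $k$-th summand of $\mathrm{sum}(1)$, with $\bar{\m}_k(1)$ emerging from the ratio of consecutive exponents in the nesting.

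The hard part will be the bookkeeping across all $r+1$ nested layers of the expectation tower, since naive differentiation generates many cross-terms. The cleanest route is to recognize that this computation is structurally identical to the $u^{(4,k)}$-channel restriction of the $t$-derivative computation already carried out in the proof of Theorem~\ref{thm:thm4} (see \cite{Stojnicnflgscompyx23,Stojnicsflgscompyx23}); modulo this specialization the identity follows immediately, so the corollary reduces to a direct combination of Theorem~\ref{thm:thm5} with the multi-level interpolation identities established in the references.
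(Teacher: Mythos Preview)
Your proposal is correct and aligns with the paper's own treatment. The paper's proof is a single sentence deferring entirely to Corollary~1 of \cite{Stojnicsflgscompyx23} (modulo dropping unit-norm assumptions and working with the general $\psi_1$ from (\ref{eq:saip1})); your outline unpacks exactly what that deferral entails --- the definitional nature of (\ref{eq:thm6eq1})--(\ref{eq:thm6eq1a1}), the observation $\psi_S(\cdot,0)=\psi(\cdot,0)$, the transport via Theorem~\ref{thm:thm5}, and the identification of the $u^{(4,k)}$-channel interpolation at $t=1$ as the residual computation --- and correctly routes that last step back to the multi-level Stein machinery of \cite{Stojnicnflgscompyx23,Stojnicsflgscompyx23}, which is precisely the content the paper is citing.
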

 \begin{proof}
   Follows through the same steps as Corollary 1 of \cite{Stojnicsflgscompyx23} by  disregarding its unit norm specializations and instead relying on general $\psi(\cdot)$ from (\ref{eq:saip1}).
 \end{proof}

\section{Practical utilization}
\label{sec:practical}

In this section we show how the above machinery can be practically utilized. To that end, we first observe
\begin{eqnarray}
\label{eq:fl3}
\lim_{n\rightarrow\infty} \psi_S(f,\calX,\calY,\bar{\p}(1),\bar{\q}(1),\bar{\m}(1),\infty,-1,1)
& = & \lim_{n\rightarrow\infty} \frac{\mE_G \max_{\x\in\cX} \lp - \max_{\y\in\cY}  \lp f(\x)+ \y^TG\x \rp\rp}{\sqrt{n}} \nonumber \\
& = & -\lim_{n\rightarrow\infty} \frac{\mE_G \min_{\x\in\cX} \max_{\y\in\cY}  \lp f(\x)+  \y^TG\x \rp}{\sqrt{n}} \nonumber \\
& = &
-\lim_{n\rightarrow\infty} \frac{\mE_G\xi(f,\cX)}{\sqrt{n}},
\end{eqnarray}
where the first equality follows from the definition of $\psi_S(f,\calX,\calY,\bar{\p}(1),\bar{\q}(1),\bar{\m}(1),\infty,-1,1)$ from (\ref{eq:thm6eq2})-(\ref{eq:thm6eq4}) and the last equality follows from (\ref{eq:randlincons2}). Under the boundedness assumption, one can, at $t=0$,  relying on both  the $\gamma$'s decoupling over $\x$ and $\y$ and the decoupling over the components of $\x$ and $\y$, discretize over possible values for $\|\x\|_2$ and $\|\y\|_2$. That means that all of the above discussion can be repeated by taking fixed (and ultimately optimal) values $\bar{x}=\|\x\|_2$ and $\bar{y}=\|\y\|_2$ which, in return, enables the following, more convenient form of (\ref{eq:thm6eq5}),
\begin{eqnarray}\label{eq:thm6eq6}
 \lim_{\beta,n\rightarrow\infty} \psi_S(f,\calX,\calY,\bar{\p}(1),\bar{\q}(1),\bar{\m}(1),\beta,s,1) & = &
 \lim_{\beta,n\rightarrow\infty} \Bigg(\Bigg.  -\frac{\mbox{sign}(s) s \beta}{2\sqrt{n}} \bar{x}^2\bar{y}^2   \nonumber \\
& &\times  \sum_{k=1}^{r+1}\Bigg(\Bigg. \bar{\p}_{k-1}(0)\bar{\q}_{k-1}(0)
   -\bar{\p}_{k}(0)\bar{\q}_{k}(0)
  \Bigg.\Bigg)
\bar{\m}_k(0)
 \nonumber \\
&  & + \psi_S(f,\cX_{\bar{x}},\cY_{\bar{y}},\bar{\p}(0),\bar{\q}(0),\bar{\m}(0),\beta,s,0) \Bigg.\Bigg),
 \end{eqnarray}
 where $\bar{\p}(0)$, $\bar{\q}(0)$, and $\bar{\m}(0)$ are evaluated from (\ref{eq:saip4}) for $\bar{x}=\|\x\|_2$ and $\bar{y}=\|\y\|_2$ and
 \begin{eqnarray}\label{eq:thm6eq6a1a0}
\cX_x & \triangleq & \{\x| \|\x\|_2=x,\x\in\cX\}  \nonumber \\
\cY_y & \triangleq & \{\y| \|\y\|_2=y,\y\in\cY\}.
\end{eqnarray}
Keeping in mind that $\bar{\m}_1(0)\rightarrow 1$
and
\begin{eqnarray}\label{eq:thm6eq6a1a1}
 \psi_S(f,\calX,\calY,\bar{\p}(0),\bar{\q}(0),\bar{\m}(0),\beta,s,0)=\psi(f,\calX,\calY,\bar{\p}(0),\bar{\q}(0),\bar{\m}(0),\beta,s,0),
  \end{eqnarray}
one then has that (\ref{eq:thm6eq1}) and (\ref{eq:thm6eq5}) imply that (\ref{eq:thm6eq6}) can  be rewritten for $s=-1$ as
\begin{eqnarray}\label{eq:thm6eq6a1a2}
 \lim_{n\rightarrow\infty} \psi_S(f,\calX,\calY,\bar{\p}(1),\bar{\q}(1),\bar{\m}(1),\infty,-1,1) & = &
 \lim_{\beta,n\rightarrow\infty} \Bigg(\Bigg.  \psi_1(f,\cX_{\bar{x}},\cY_{\bar{x}},\bar{\p},\bar{\q},\bar{\m},\beta,-1,0) \Bigg.\Bigg),
 \end{eqnarray}
where, $\bar{\p}_0=\bar{\q}_0=\bar{\m}_0=1$, $\bar{\m}_1\rightarrow 1$, and, similarly to (\ref{eq:saip4}), for $k_1\in\{1,2,\dots,r\}$, $\bar{\p},\bar{\q}$, and $\bar{\m}$ are the solutions to the following system of equations
\begin{eqnarray}\label{eq:thm6eq6a1a3}
\frac{d\psi_1(f,\cX_{\bar{x}},\cY_{\bar{y}},\p,\q,\m,\beta,-1,0)}{d\p_{k_1}}
& = & 0, \nonumber \\
\frac{d\psi_1(f,\cX_{\bar{x}},\cY_{\bar{y}},\p,\q,\m,\beta,-1,0)}{d\q_{k_1}}
& = & 0, \nonumber \\
\frac{d\psi_1(f,\cX_{\bar{x}},\cY_{\bar{y}},\p,\q,\m,\beta,-1,0)}{d\m_{k_1}}
 & = & 0.
  \end{eqnarray}
Since $\m_1\rightarrow 1$, we can  integrate out the most inner expectation of $\psi(\cdot)$ over $\x$ to obtain
\begin{eqnarray}\label{eq:thm6eq6a1a4}
\psi_1(f,\cX_{x},\cY_{y},\p,\q,\m,\beta,s,0)
& = &
  \Bigg(\Bigg.
 -\frac{\beta x^2y^2}{2\sqrt{n}} \lp 1   -\p_{1}\q_{1} \rp   -\frac{\beta  x^2y^2}{2\sqrt{n}}  \sum_{k=2}^{r+1}\Bigg(\Bigg. \p_{k-1}\q_{k-1}
   -\p_{k}\q_{k}
  \Bigg.\Bigg)
\m_k
 \nonumber \\
&  &
 +\frac{\beta x^2y^2}{2\sqrt{n}} \lp 1   -\q_{1}\rp
 +  \psi_{S,2}(f,\cX_x,\cY_x,\p,\q,\m,\beta,-1)    \Bigg.\Bigg), \nonumber \\
  \end{eqnarray}
where, similarly  to (\ref{eq:thm6eq2}),
 \begin{equation}\label{eq:thm6eq6a1a5}
\psi_{S,2}(f,\calX,\calY,\p,\q,\m,\beta,s)  =  \mE_{G,{\mathcal U}_{r+1}} \frac{1}{\beta|s|\sqrt{n}\m_r} \log
\lp \mE_{{\mathcal U}_{r}} \lp \dots \lp \mE_{{\mathcal U}_3}\lp\lp\mE_{{\mathcal U}_2} \lp Z_{S,2}^{\m_1}\rp\rp^{\frac{\m_2}{\m_1}}\rp\rp^{\frac{\m_3}{\m_2}} \dots \rp^{\frac{\m_{r}}{\m_{r-1}}}\rp,
\end{equation}
and,  analogously to (\ref{eq:thm3eq1}) and (\ref{eq:thm3eq2}) (or (\ref{eq:thm6eq3}) and (\ref{eq:thm6eq4})),
\begin{equation}\label{eq:thm6eq6a1a6}
Z_{S,2}  \triangleq  \sum_{x\in\cX}\lp\sum_{\y\in\cY}e^{\beta D_{0,S,2}^{(\x,\y)}} \rp^{s}, \nonumber \\
\end{equation}
but
\begin{eqnarray}\label{eq:thm6eq6a1a7}
 D_{0,S,2}^{(\x,\y)} &  \triangleq & f(\x) +\|\x\|_2 \y^T\lp\sum_{k=1}^{r+1}b_k\u^{(2,k)}\rp   +\|\y\|_2\lp\sum_{k=2}^{r+1}c_k\h^{(k)}\rp^T\x.
 \end{eqnarray}
A ground state regime rescaling, $\beta\rightarrow \bar{\beta}\sqrt{n}$, (where ($\bar{\beta}\rightarrow\infty$)) gives
$\bar{\beta}\bar{\m}_k=\bar{\c}_k$ ($\bar{\beta}\m_k=\c_k$) for $k>1$ and $\bar{\c}_1\rightarrow\bar{\m}_1\rightarrow 1$ (or $\c_1\rightarrow \m_1\rightarrow 1$) with $\bar{\c}_k$ (or $\c_k$) remaining constants as $\bar{\beta}$ grows. We then have
\begin{eqnarray}\label{eq:thm6eq6a1a8}
\lim_{\bar{\beta}\rightarrow\infty}\psi_1(f,\cX_{x},\cY_{y},\p,\q,\m,\beta,-1,0)
& = &
 \lim_{\bar{\beta}\rightarrow\infty}  \Bigg(\Bigg.
 -\frac{\bar{\beta} x^2y^2}{2} \lp 1   -\p_{1}\q_{1} \rp   -\frac{x^2y^2}{2}  \sum_{k=2}^{r+1}\Bigg(\Bigg. \p_{k-1}\q_{k-1}
   -\p_{k}\q_{k}
  \Bigg.\Bigg)
\c_k
 \nonumber \\
&  &
 +\frac{\bar{\beta} x^2y^2}{2} \lp 1   -\q_{1}\rp
 +  \psi_{S,2}(f,\cX_x,\cY_y,\p,\q,\m,\beta,-1)    \Bigg.\Bigg). \nonumber \\
  \end{eqnarray}
One now also observes
\begin{eqnarray}\label{eq:thm6eq6a1a9}
\lim_{\bar{\beta}\rightarrow\infty}
\frac{\psi_1(f,\cX_{x},\cY_{y},\p,\q,\m,\beta,-1,0)}{d\q_1}
& = &
 \lim_{\bar{\beta}\rightarrow\infty}  \Bigg(\Bigg.
 -\frac{\bar{\beta} x^2y^2}{2} \lp 1   -\p_{1}\rp  +O(1) \Bigg.\Bigg) =0, \nonumber \\
  \end{eqnarray}
which ensures
\begin{eqnarray}\label{eq:thm6eq6a1a9a0}
\bar{\p}_1=1-\frac{c_x}{\bar{\beta}}\rightarrow 1,
  \end{eqnarray}
where $c_x$ remains constant as $\bar{\beta}$ grows. One now also has
\begin{eqnarray}\label{eq:thm6eq6a1a10}
  \lim_{\bar{\beta}\rightarrow\infty}    \psi_{S,2}(f,\cX_x,\cY_y,\p,\q,\m,\beta,-1)
  & = &
  \lim_{\bar{\beta}\rightarrow\infty}    \psi_{S,3}(f,\cX_x,\cY_y,\p,\q,\m,\beta,-1)
  . \nonumber \\
  \end{eqnarray}
where
{\small  \begin{eqnarray}\label{eq:thm6eq6a1a11}
\psi_{S,3}(f,\calX,\calY,\p,\q,\m,\beta,s)  =  \mE_{G,{\mathcal U}_{r+1}} \frac{1}{\beta|s|\sqrt{n}\m_r} \log
\lp \mE_{{\mathcal U}_{r}} \lp \dots \lp \mE_{{\mathcal U}_3}\lp\lp\mE_{{\mathcal U}_2} \lp Z_{S,3}^{\m_1}\rp\rp^{\frac{\m_2}{\m_1}}\rp\rp^{\frac{\m_3}{\m_2}} \dots \rp^{\frac{\m_{r}}{\m_{r-1}}}\rp,
 \end{eqnarray}}

\noindent and,  analogously to (\ref{eq:thm3eq1}) and (\ref{eq:thm3eq2}),
\begin{eqnarray}\label{eq:fl5}
Z_{S,3} & \triangleq & e^{\beta D_{0,S,3}} \nonumber \\
 D_{0,S,3} & \triangleq  & \max_{\x\in\cX,\|\x\|_2=x} s \max_{\y\in\cY,\|\y\|_2=y}
 \lp f_{S}
+y    \lp\sum_{k=2}^{r+1}c_k\h^{(k)}\rp^T\x
+ x \y^T\lp\sum_{k=1}^{r+1}b_k\u^{(2,k)}\rp \rp, \nonumber
 \end{eqnarray}
 where $a_k\triangleq a_k(\p,\q)$, $b_k\triangleq b_k(\p,\q)$, and $c_k\triangleq c_k(\p,\q)$ are as in  (\ref{eq:thm3eq2a00}). Since
 $\bar{\p}_1=1-\frac{c_x}{\beta}\rightarrow 1$, one has $b_1=\sqrt{1-\bar{\p}_1}=\sqrt{\frac{c_x}{\beta}}\rightarrow 0$. Let $\bar{\y}$ be the solution of the above optimization. Then one can assume that $\bar{\y}$ depends only on those $k$ where $b_k=O(1)$. That also means that one can write
  \begin{eqnarray}\label{eq:thm6eq6a1a12}
\lim_{\beta\rightarrow\infty}\psi_{S,3}(f,\cX_x,\cY_y,\p,\q,\m,\beta,-1)
&  =  &\lim_{\beta\rightarrow\infty}
\lp\frac{\beta^2x^2b_1^2\|\bar{\y}\|_2^2}{2\beta\sqrt{n}}
+\psi_{S,\infty}(f,\cX_{x},\cY_{y},\p,\q,\m,x,y) \rp \nonumber \\
&  =  &\lim_{\beta\rightarrow\infty}
\lp \frac{\bar{\beta} x^2y^2(1-\p_1)}{2}
+\psi_{S,\infty}(f,\cX_{x},\cY_{y},\p,\q,\m,x,y) \rp, \nonumber \\
  \end{eqnarray}
 where
  \begin{eqnarray}\label{eq:fl4}
\psi_{S,\infty}(f_{S},\calX,\calY,\p,\q,\c,x,y)  =
 \mE_{G,{\mathcal U}_{r+1}} \frac{1}{n\c_r} \log
\lp \mE_{{\mathcal U}_{r}} \lp \dots \lp \mE_{{\mathcal U}_2}\lp\lp\mE_{{\mathcal U}_1} \lp Z_{S,\infty}^{\c_2}\rp\rp^{\frac{\c_3}{\c_2}}\rp\rp^{\frac{\c_4}{\c_3}} \dots \rp^{\frac{\c_{r}}{\c_{r-1}}}\rp \nonumber \\
 \end{eqnarray}
and,  analogously to (\ref{eq:thm3eq1}) and (\ref{eq:thm3eq2}),
\begin{eqnarray}\label{eq:fl5}
Z_{S,\infty} & \triangleq & e^{D_{0,S,\infty}} \nonumber \\
 D_{0,S,\infty} & \triangleq  & \max_{\x\in\cX,\|\x\|_2=x} - \max_{\y\in\cY,\|\y\|_2=y}
 \lp f_{S}
+y    \lp\sum_{k=2}^{r+1}c_k\h^{(k)}\rp^T\x^{(i_1)}
+ x (\y^{(i_2)})^T\lp\sum_{k=2}^{r+1}b_k\u^{(2,k)}\rp \rp \nonumber
\\& = & - \min_{\x\in\cX,\|\x\|_2=x}  \max_{\y\in\cY,\|\y\|_2=y}
 \lp f_{S}
+y    \lp\sum_{k=2}^{r+1}c_k\h^{(k)}\rp^T\x^{(i_1)}
+ x (\y^{(i_2)})^T\lp\sum_{k=2}^{r+1}b_k\u^{(2,k)}\rp \rp, \nonumber \\
 \end{eqnarray}
 where, as above and as in  (\ref{eq:thm3eq2a00}), $a_k\triangleq a_k(\p,\q)$, $b_k\triangleq b_k(\p,\q)$, and $c_k\triangleq c_k(\p,\q)$. Combining
(\ref{eq:thm6eq6a1a8}), (\ref{eq:thm6eq6a1a10}), and (\ref{eq:thm6eq6a1a12}), we find
\begin{align}\label{eq:thm6eq6a1a13}
\lim_{\bar{\beta}\rightarrow\infty}\psi_1(f,\cX_{x},\cY_{y},\p,\q,\m,\beta,-1,0)
& =
 \lim_{\bar{\beta}\rightarrow\infty}  \Bigg(\Bigg.
 -\frac{\bar{\beta} x^2y^2}{2} \lp 1   -\p_{1}\q_{1} \rp   -\frac{x^2y^2}{2}  \sum_{k=2}^{r+1}\Bigg(\Bigg. \p_{k-1}\q_{k-1}
   -\p_{k}\q_{k}
  \Bigg.\Bigg)
\c_k
 \nonumber \\
&  \quad
 +\frac{\bar{\beta} x^2y^2( 1   -\q_{1})}{2}
 + \frac{\bar{\beta} x^2y^2(1-\p_1)}{2}
+\psi_{S,\infty}(f,\cX_{x},\cY_{y},\p,\q,\m,x,y)   \Bigg.\Bigg). \nonumber \\
  \end{align}
One now also observes
\begin{eqnarray}\label{eq:thm6eq6a1a14}
\lim_{\bar{\beta}\rightarrow\infty}
\frac{\psi_1(f,\cX_{x},\cY_{y},\p,\q,\m,\beta,-1,0)}{d\p_1}
& = &
 \lim_{\bar{\beta}\rightarrow\infty}  \Bigg(\Bigg.
 -\frac{\bar{\beta} x^2y^2}{2} \lp 1   -\q_{1}\rp  +O(1) \Bigg.\Bigg) =0, \nonumber \\
  \end{eqnarray}
which ensures
\begin{eqnarray}\label{eq:thm6eq6a1a14a0}
\bar{\q}_1=1-\frac{c_y}{\bar{\beta}}\rightarrow 1,
  \end{eqnarray}
 where $c_y$ remains constant as $\bar{\beta}$ grows. Combining
(\ref{eq:thm6eq6a1a9a0}), (\ref{eq:thm6eq6a1a13}), and (\ref{eq:thm6eq6a1a14a0}), we find
\begin{align}\label{eq:thm6eq6a1a15}
\lim_{\bar{\beta}\rightarrow\infty}\psi_1(f,\cX_{x},\cY_{y},\p,\q,\m,\beta,-1,0)
& =
 \lim_{\bar{\beta}\rightarrow\infty}  \Bigg(\Bigg.
 -\frac{\bar{\beta} x^2y^2}{2} \lp 1   -\p_{1}\q_{1} \rp   -\frac{x^2y^2}{2}  \sum_{k=2}^{r+1}\Bigg(\Bigg. \p_{k-1}\q_{k-1}
   -\p_{k}\q_{k}
  \Bigg.\Bigg)
\c_k
 \nonumber \\
&  \quad
 +\frac{\bar{\beta} x^2y^2( 1   -\q_{1})}{2}
 + \frac{\bar{\beta} x^2y^2(1-\p_1)}{2}
+\psi_{S,\infty}(f,\cX_{x},\cY_{y},\p,\q,\m,x,y)   \Bigg.\Bigg) \nonumber \\
& =
    -\frac{x^2y^2}{2}  \sum_{k=2}^{r+1}\Bigg(\Bigg. \p_{k-1}\q_{k-1}
   -\p_{k}\q_{k}
  \Bigg.\Bigg)
\c_k
  +\psi_{S,\infty}(f,\cX_{x},\cY_{y},\p,\q,\m,x,y), \nonumber \\
  \end{align}
  and
\begin{equation}\label{eq:thm6eq6a1a16}
-\lim_{\bar{\beta}\rightarrow\infty}\psi_1(f,\cX_{x},\cY_{y},\p,\q,\m,\beta,-1,0)
  =
    \frac{x^2y^2}{2}  \sum_{k=2}^{r+1}\Bigg(\Bigg. \p_{k-1}\q_{k-1}
   -\p_{k}\q_{k}
  \Bigg.\Bigg)
\c_k
  -\psi_{S,\infty}(f,\cX_{x},\cY_{y},\p,\q,\m,x,y).
    \end{equation}
 After adding an outer optimization over $x$ and $y$, we obtain
\begin{eqnarray}\label{eq:thm6eq6a1a17}
-\lim_{\beta,n\rightarrow\infty}\psi_1(f,\cX_{\bar{x}},\cY_{\bar{y}},\bar{\p},\bar{\q},\bar{\m},\beta,-1,0)
&  = & \min_{x>0}\max_{y>0}
\lim_{n\rightarrow\infty} \Bigg(\Bigg.   \frac{x^2y^2}{2}  \sum_{k=2}^{r+1}\Bigg(\Bigg. \bar{\p}_{k-1}\bar{\q}_{k-1}
   -\bar{\p}_{k}\bar{\q}_{k}
  \Bigg.\Bigg)
\c_k \nonumber \\
& &   -\psi_{S,\infty}(f,\cX_{x},\cY_{y},\bar{\p},\bar{\q},\bar{\m},x,y) \Bigg.\Bigg),
    \end{eqnarray}
where $\bar{\p},\bar{\q}$, and $\bar{\m}$ are the solutions to the system of equations
(\ref{eq:thm6eq6a1a3}) evaluated for $\cX_x$ and $\cY_y$ with $x$ and $y$ being the running parameters of the above optimization. A combination of (\ref{eq:fl3}), (\ref{eq:thm6eq6a1a2}), and (\ref{eq:thm6eq6a1a17}) finally gives
\begin{eqnarray}\label{eq:thm6eq7}
\lim_{n\rightarrow\infty} \frac{\mE_G \xi(f,\cX)}{\sqrt{n}}
 & = &
\min_{x>0}\max_{y>0}
\lim_{n\rightarrow\infty} \lp  \frac{x^2y^2}{2}  \sum_{k=2}^{r+1}\Bigg(\Bigg. \p_{k-1}\q_{k-1}
   -\p_{k}\q_{k}
  \Bigg.\Bigg)
\c_k
  -\psi_{S,\infty}(f,\cX_{x},\cY_{y},\p,\q,\m,x,y) \rp. \nonumber \\
 \end{eqnarray}
The above results are summarized in the following theorem.
\begin{theorem}
\label{thm:thmsflrdt1}  Assume large $n$ regime and $\lim_{n\rightarrow\infty} \frac{m_1}{n}=\alpha_1$ and $\lim_{n\rightarrow\infty} \frac{m_2}{n}=\alpha_2$, where $\alpha_1$ and $\alpha_2$ are constants (i.e., independent of $n$). Let matrices $A\in\mR^{m_1\times n}$
and $B\in\mR^{m_2\times n}$ be comprised of i.i.d. standard normal elements. For a given function $f(\x):R^n\rightarrow R$, let $\xi(f,\cX)$ be the objective value of the optimization problem in (\ref{eq:randlincons1}). Assume complete sfl RDT frame from \cite{Stojnicsflgscompyx23} and that $f(\x)$ is such that $|\xi(f,\cX)|<\infty$ with overwhelming probability. Set
\begin{align}\label{eq:thmsflrdt2eq1}
   \psi_{rp} & \triangleq  \min_{\x\in\cX} \max_{\y\in\cY} \lp f(\x)+\y^TG\x \rp
   \qquad  \mbox{(\bl{\textbf{random primal}})} \nonumber \\
   \psi_{rd}(\p,\q,\c,x,y) & \triangleq    \frac{x^2y^2}{2}    \sum_{k=2}^{r+1}\Bigg(\Bigg.
   \p_{k-1}\q_{k-1}
   -\p_{k}\q_{k}
  \Bigg.\Bigg)
\c_k
  - \psi_{S,\infty}(f(\x),\calX,\calY,\p,\q,\c,x,y) \qquad \mbox{(\bl{\textbf{fl random dual}})}. \nonumber \\
 \end{align}
Let $\hat{\p}\triangleq \hat{\p}(x,y)$, $\hat{\q}\triangleq \hat{\q}(x,y)$, and  $\hat{\c}\triangleq \hat{\c}(x,y)$ be the solutions of the following system
\begin{eqnarray}\label{eq:thmsflrdt2eq2}
   \frac{d \psi_{rd}(\p,\q,\c,x,y)}{d\p} =  0,\quad
   \frac{d \psi_{rd}(\p,\q,\c,x,y)}{d\q} =  0,\quad
   \frac{d \psi_{rd}(\p,\q,\c,x,y)}{d\c} =  0.
 \end{eqnarray}
 Then,
\begin{eqnarray}\label{eq:thmsflrdt2eq3}
    \lim_{n\rightarrow\infty} \frac{\mE_G  \psi_{rp}}{\sqrt{n}}
  & = &
\min_{x>0} \max_{y>0} \lim_{n\rightarrow\infty} \psi_{rd}(\hat{\p}(x,y),\hat{\q}(x,y),\hat{\c}(x,y),x,y) \qquad \mbox{(\bl{\textbf{strong sfl random duality}})},\nonumber \\
 \end{eqnarray}
where $\psi_{S,\infty}(\cdot)$ is as in (\ref{eq:fl4}).
 \end{theorem}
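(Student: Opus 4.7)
The plan is to assemble the claim by recognizing that the statement merely packages the chain of reductions already carried out in Section \ref{sec:practical} into a single clean identity, so the proof will be a careful bookkeeping exercise rather than a fresh computation.

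\textbf{Step 1: Identify the left-hand side with a stationarized interpolator.} First I would observe that the $\beta\to\infty$, $s=-1$, $t=1$ specialization of $\psi_S$ collapses precisely to $-\mE_G\xi(f,\cX)/\sqrt{n}$, which is the content of (\ref{eq:fl3}). This reduces the theorem to showing that $\lim_n \psi_S(f,\calX,\calY,\bar{\p}(1),\bar{\q}(1),\bar{\m}(1),\infty,-1,1)$ equals the right-hand side of (\ref{eq:thmsflrdt2eq3}), with a sign flip.

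\textbf{Step 2: Transport from $t=1$ to $t=0$ via the sfl machinery.} Next I would apply Corollary \ref{thm:thm6}, specifically the identity (\ref{eq:thm6eq5}), to pass from $\psi_S$ at $t=1$ to the auxiliary functional $\psi_1$ at $t=0$. This is the step that genuinely uses the stationarization theorem (Theorem \ref{thm:thm5}): because $\bar{\p}(t),\bar{\q}(t),\bar{\m}(t)$ solve the stationarity system (\ref{eq:saip4}), the $t$-derivative of $\psi_1$ vanishes, and so the boundary values at $t=0$ and $t=1$ are linked through the explicit correction term written in (\ref{eq:thm6eq5}).

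\textbf{Step 3: Discretize norms and rescale.} At $t=0$ the kernel $D_{0,S,2}$ decouples cleanly over the components of $\x$ and of $\y$, so I would restrict the outer sums in $Z_{S,2}$ to level sets $\cX_x$ and $\cY_y$ of fixed norms, defer the optimization over $x,y$ to the very end, and apply the ground-state rescaling $\beta=\bar{\beta}\sqrt{n}$ with $\bar{\beta}\m_k=\c_k$, $\m_1\to 1$. This converts the logarithms into the max-max (equivalently min-max) expressions that make up $D_{0,S,\infty}$ and produces (\ref{eq:thm6eq6a1a8}).

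\textbf{Step 4: Eliminate $\p_1$ and $\q_1$ via the saddle-point conditions.} The technical core, and the one place where something could go wrong, is verifying (\ref{eq:thm6eq6a1a9})--(\ref{eq:thm6eq6a1a9a0}) and their twins (\ref{eq:thm6eq6a1a14})--(\ref{eq:thm6eq6a1a14a0}). I would differentiate $\psi_1$ in $\q_1$ (resp.\ $\p_1$); the diverging prefactor $\bar{\beta} x^2y^2/2$ multiplies $(1-\p_1)$ (resp.\ $(1-\q_1)$), while all remaining contributions remain $O(1)$ after carefully tracking the $b_1=\sqrt{1-\bar{\p}_1}$ term in $\psi_{S,2}$. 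This forces $\bar{\p}_1,\bar{\q}_1\to 1$ at rate $1/\bar{\beta}$, and simultaneously accounts for the $\bar{\beta} x^2y^2(1-\p_1)/2$ piece in (\ref{eq:thm6eq6a1a12}) via the argument that the optimizing $\bar{\y}$ only sees the $O(1)$ increments $b_k$. I expect this step to be the main obstacle, because one must show that the divergent $O(\bar{\beta})$ contributions on the two sides cancel exactly, leaving a finite, explicit residue. The cancellation is laid out in (\ref{eq:thm6eq6a1a15}).

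\textbf{Step 5: Add the outer $\min_x\max_y$ and identify with $\psi_{rd}$.} Combining (\ref{eq:fl3}), (\ref{eq:thm6eq6a1a2}), and (\ref{eq:thm6eq6a1a17}), and recognizing the surviving expression as exactly the definition of $\psi_{rd}(\p,\q,\c,x,y)$ evaluated at the stationary point $(\hat{\p},\hat{\q},\hat{\c})$ determined by (\ref{eq:thmsflrdt2eq2}), yields (\ref{eq:thmsflrdt2eq3}). The outer $\min_{x>0}\max_{y>0}$ is simply the restoration of the norm optimization that was temporarily frozen in Step 3. This closes the argument.
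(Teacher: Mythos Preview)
Your proposal is correct and follows essentially the same route as the paper: the theorem is stated as a packaging of the chain of reductions in Section~\ref{sec:practical}, and the paper's own proof is the one-line remark that $\psi_1(\cdot)$ from (\ref{eq:saip4}) at $t=0$ with fixed $\|\x\|_2=x$, $\|\y\|_2=y$ is exactly $\psi_{rd}(\cdot)$. Your Steps~1--5 trace precisely the derivation (\ref{eq:fl3})--(\ref{eq:thm6eq7}), including the use of Theorem~\ref{thm:thm5}/Corollary~\ref{thm:thm6} to pass from $t=1$ to $t=0$, the norm discretization and ground-state rescaling, and the $\bar{\p}_1,\bar{\q}_1\to 1$ cancellation of the $O(\bar{\beta})$ pieces.
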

\begin{proof}
Follows from previous discussion after recognizing that $\psi_1(\cdot)$ in (\ref{eq:saip4}) evaluated for $t=0$ and fixed $x$ and $y$, such that $\|\x\|_2=x$ and $\|\y\|_2=y$, is precisely $\psi_{rd}(\cdot)$.
\end{proof}

 Since the random primal concentrates, one can then write various corresponding probabilistic variants of (\ref{eq:thmsflrdt2eq3}). We, however, skip such exercises and, instead, note that the above theorem essentially connects $\psi_{rp}$ and $\psi_{rd}$. That basically means that if, say $\cX$ and $\cY$ are such that their elements are of fixed norms the above theorem holds for fixed $x$ and $y$, i.e., it holds without the optimization over $x$ and $y$ in (\ref{eq:thmsflrdt2eq3}). Various further generalizations and applications are possible as well. We below discuss a few interesting ones and defer a more complete discussion to separate papers that treat various specific problems in details.

\subsection{Non-homogeneous linear constraints}
\label{sec:nonhomcons}

The mechanism presented in the previous section is very powerful tool to characterize behavior of random linearly constrained optimization programs. It is also very generic and can be extended in many directions. As mentioned above, such extensions are typically problem specific and we defer studying them in details to separate papers. We do, however, mention here that all the tools that we use in those papers are conceptually already contained in what we presented above.

The example that we discuss below is a rather small modification of the above generic model and is provided to give a bit of a hint as to how relatively easily the whole framework can be massaged to fit various other scenarios. As it will be clear soon, all these modifications could have already been included in our original setup, albeit, at the expense of having a tone of tiny details overwhelm the presentation. In order to preserve the lightness of the exposition and to facilitate writing and reading, we opted to start with the simplest possible example and then build from there.

It is not that difficult to see that the set of linear constraints in (\ref{eq:lincons}) is scaling invariant (basically homogeneous). For any $\x$ that satisfies linear constraints of (\ref{eq:lincons}), $c\x$ does so as well as long as $c\geq 0$. Instead of homogeneous linear constraints one, usually, has the following, more general variant,
\begin{eqnarray}
\min_{\x} & & f(\x)\nonumber \\
\mbox{subject to} & & A\x=\a\nonumber \\
& & B\x\leq \b \nonumber \\
& & \x\in\cX, \label{eq:nonhomlincons1}
\end{eqnarray}
where $\a\in\mR^{m_1}$ and $\b\in\mR^{m_2}$. Vectors $\a$ and $\b$ can be random, deterministic (fixed), or a mixture of both. All possible scenarios can easily be handled by the above machinery. For the concreteness, we below sketch how the presented results adapt if $\a$ and $\b$ are deterministic. Almost the entire derivation presented in the previous section can be repeated line by line. In particular, one starts by defining the optimal value of the objective in (\ref{eq:nonhomlincons1}) as
\begin{eqnarray}
\xi_{nh}(f,\cX)=\min_{\x} & & f(\x)\nonumber \\
\mbox{subject to} & & A\x=\a\nonumber \\
& & B\x\leq \b \nonumber \\
& & \x\in\cX, \label{eq:nonhomlincons2}
\end{eqnarray}
and then writes the follwoing analogue to (\ref{eq:randlincons2})
\begin{eqnarray}
\xi_{nh}(f,\cX) & = & \min_{\x\in\cX}\max_{\lambda\geq 0,\nu} f(\x)+\nu^T A\x+\lambda^T B\x+\nu^T\a+\lambda^T\b \nonumber \\
& = & \min_{\x\in\cX}\max_{\y\in\cY} f(\x)+\y^TG\x+\y^T\g, \label{eq:nonhomlincons3}
\end{eqnarray}
where $\g=\begin{bmatrix}
          \a^T & \b^T
        \end{bmatrix}^T$.
One can then repeat the entire derivation from the previous section with very minimal and rather obvious modifications and obtain the following corollary of Theorem \ref{thm:thmsflrdt1}.

\begin{corollary}
\label{thm:thmsflrdt2}  Assume the setup of Theorem \ref{thm:thmsflrdt1} and sfl RDT frame of \cite{Stojnicsflgscompyx23}. Set
\begin{align}\label{eq:thmsflrdt3eq2}
   \psi_{rp}^{(nh)} & \triangleq  \min_{\x\in\cX} \max_{\y\in\cY} \lp f(\x)+\y^TG\x +\y^T\g \rp  \nonumber \\
   \psi_{rd}^{(nh)}(\p,\q,\c,x,y) & \triangleq    \frac{x^2y^2}{2}    \sum_{k=2}^{r+1}\Bigg(\Bigg.
   \p_{k-1}\q_{k-1}
   -\p_{k}\q_{k}
  \Bigg.\Bigg)
\c_k
  - \psi_{S,\infty}(f(\x)+\y^T\g,\calX,\calY,\p,\q,\c,x,y). \nonumber \\
 \end{align}
Let $\hat{\p}\triangleq \hat{\p}(x,y)$, $\hat{\q}\triangleq \hat{\q}(x,y)$, and  $\hat{\c}\triangleq \hat{\c}(x,y)$ be the solutions of the following system
\begin{eqnarray}\label{eq:thmsflrdt3eq2}
   \frac{d \psi_{rd}^{(nh)}(\p,\q,\c,x,y)}{d\p} =  0,\quad
   \frac{d \psi_{rd}^{(nh)}(\p,\q,\c,x,y)}{d\q} =  0,\quad
   \frac{d \psi_{rd}^{(nh)}(\p,\q,\c,x,y)}{d\c} =  0.
 \end{eqnarray}
 Then,
\begin{eqnarray}\label{eq:thmsflrdt3eq3}
    \lim_{n\rightarrow\infty} \frac{\mE_G  \psi_{rp}^{(nh)}}{\sqrt{n}}
  & = &
\min_{x>0} \max_{y>0} \lim_{n\rightarrow\infty} \psi_{rd}^{(nh)}(\hat{\p}(x,y),\hat{\q}(x,y),\hat{\c}(x,y),x,y),\nonumber \\
 \end{eqnarray}
where $\psi_{S,\infty}^{(nh)}(\cdot)$ is as in (\ref{eq:fl4}).
  \end{corollary}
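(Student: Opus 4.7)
The plan is to mirror the derivation that established Theorem \ref{thm:thmsflrdt1}, treating the deterministic additive term $\y^T\g$ as a perturbation of the objective that is transparent to the Gaussian interpolation machinery. First, I would take (\ref{eq:nonhomlincons3}) as the analogue of (\ref{eq:randlincons2}) and note that the bilinearly indexed process $\y^TG\x$ is unchanged; only the ``function of $(\x,\y)$'' piece gets enlarged from $f(\x)$ to $\tilde f(\x,\y)\triangleq f(\x)+\y^T\g$. Since $\g$ is deterministic (independent of $G$ and of all the auxiliary Gaussians ${\mathcal U}_k$), the entire fully lifted interpolation setup of Section \ref{sec:lbrandlincons} applies verbatim: one just replaces $f(\x^{(i_1)})$ by $\tilde f(\x^{(i_1)},\y^{(i_2)})=f(\x^{(i_1)})+(\y^{(i_2)})^T\g$ in the definition of $D_0^{(i_1,i_2)}$ in (\ref{eq:thm3eq2}).

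Next, I would verify that Theorem \ref{thm:thm4} (interpolation derivative) continues to hold with this replacement. The key point is that the $t$-derivative in (\ref{eq:thm3eq6}) is computed via Gaussian integration by parts applied only to the Gaussian pieces of $D_0^{(i_1,i_2)}$; the new term $(\y^{(i_2)})^T\g$ has zero $t$-derivative and does not touch any Gaussian derivative, so it simply rides along inside the Gibbs measures $\langle\cdot\rangle_{\gamma_{k}^{(r)}}$. Consequently Theorem \ref{thm:thm5} and Corollary \ref{thm:thm6} carry over unchanged, yielding a stationarized identity for the analogue $\psi_1^{(nh)}$ built from $\tilde f$ in place of $f$.

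From here I would follow Section \ref{sec:practical} step by step. The identity (\ref{eq:fl3}) becomes $\lim_n \psi_S^{(nh)}(\cdot,\infty,-1,1)=-\lim_n \mE_G\xi_{nh}(f,\cX)/\sqrt{n}$, using precisely (\ref{eq:nonhomlincons3}). The norm discretization $\|\x\|_2=x$, $\|\y\|_2=y$ still works: the new term $\y^T\g$ is linear in $\y$ and does not obstruct the decoupling in $G$ that produces (\ref{eq:thm6eq6}). The ground state rescaling $\beta\to\bar\beta\sqrt n$ and the arguments leading to $\bar\p_1,\bar\q_1\to 1$ in (\ref{eq:thm6eq6a1a9a0}) and (\ref{eq:thm6eq6a1a14a0}) are unaffected, since $\y^T\g=O(\sqrt n)$ contributes only at the same order as $f(\x)$ inside $\psi_{S,\infty}$ and plays no role in the derivatives with respect to $\p_1$ or $\q_1$ at leading order in $\bar\beta$. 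The outcome is (\ref{eq:thm6eq6a1a17}) with $\psi_{S,\infty}(f,\cdot)$ replaced by $\psi_{S,\infty}(f+\y^T\g,\cdot)$, which is exactly $\psi_{rd}^{(nh)}$.

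The main obstacle I anticipate is purely bookkeeping: checking that the extra $\y$-dependence introduced by $\y^T\g$ does not break any of the decoupling steps used to pass from (\ref{eq:thm6eq5}) to (\ref{eq:thm6eq6}). This is handled by observing that fixing $\|\y\|_2=y$ does not collapse $\y^T\g$ (which depends on the direction of $\y$); rather, the inner $\max_{\y\in\cY,\|\y\|_2=y}$ inside the definition of $D_{0,S,\infty}$ absorbs this term naturally, so $\y^T\g$ appears simply as an additive piece of the objective being maximized in (\ref{eq:fl5}). Once that is in place, the combination of (\ref{eq:fl3}), (\ref{eq:thm6eq6a1a2}), and (\ref{eq:thm6eq6a1a17}) (with the $\tilde f$ replacement) gives (\ref{eq:thmsflrdt3eq3}), completing the proof.
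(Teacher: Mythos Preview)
Your proposal is correct and follows exactly the approach the paper takes: the paper's own proof consists of the single observation that one repeats all the steps leading to Theorem \ref{thm:thmsflrdt1} with $f(\x)$ replaced by $f(\x)+\y^T\g$. Your additional remarks about why the deterministic term $\y^T\g$ is transparent to the Gaussian integration-by-parts and the decoupling steps are accurate elaborations of precisely that replacement.
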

\begin{proof}
Follows by repeating all the steps needed to show Theorem \ref{thm:thmsflrdt1}, with $f(\x)$ replaced by $f(\x)+\y^T\g$.
\end{proof}

\subsection{A couple of well known examples}
\label{sec:examples}

Besides further generalizations, various applications are possible as well. Along the same lines, we, below, show how the above discussion directly relates to several well known problems.

\subsubsection{Random feasibility problems}
\label{sec:rfps}

As recognized in \cite{StojnicGardGen13,StojnicGardSphErr13,StojnicGardSphNeg13,StojnicDiscPercp13}, the above machinery can be used to study not only random \emph{optimization} problems (rops), but also random \emph{feasibility} problems (rfps). If one, for example, takes $f(\x)=0$, then positivity and non-positivity of the objective value in (\ref{eq:nonhomlincons3}) ensure infeasibility and feasibility of (\ref{eq:nonhomlincons1}), respectively. For the concreteness, consider the following feasibility problem (basically the non-homogenous feasibility analogue to (\ref{eq:lincons}))
\begin{eqnarray}
\mbox{find} & & \x\nonumber \\
\mbox{subject to} & & A\x=\a\nonumber \\
& & B\x\leq \b \nonumber \\
& & \x\in\cX. \label{eq:ex1}
\end{eqnarray}
Clearly, randomness of any of $A$, $B$, $\a$, and $\b$ transforms (\ref{eq:ex1}) into a random feasibility problem (rfp) which can then be studied through the machineries of previous sections. First, one can start with (\ref{eq:nonhomlincons2}) (the nonhomogeneous variant of (\ref{eq:randlincons1})) with a generic $f(\x)$ and arrive to (\ref{eq:nonhomlincons3}) (the nonhomogeneous variant of (\ref{eq:randlincons2})). Specializing (\ref{eq:nonhomlincons3}) to the scenario where $f(\x)=0$ gives
\begin{eqnarray}
\xi_{feas}^{(0)}(f,\cX) = \min_{\x\in\cX} \max_{\y\in\cY}  \lp \y^T \begin{bmatrix}
                                                                A \\ B
                                                              \end{bmatrix}\x +\y^T\begin{bmatrix}
                                                                \a \\ \b
                                                              \end{bmatrix} \rp,
 \label{eq:ex2}
\end{eqnarray}
where we recall that set $\cY$ is a collection of $\y$ such that $\y_{i}\geq 0,m_1+1\leq i\leq m_1+m_2$. The observations made earlier in the corresponding homogenous case apply here as well. In particular, if there is an $\x$ such that $A\x=\a$ and $B\x\leq \b$, i.e., such that (\ref{eq:ex1}) is feasible, then the inner maximization in (\ref{eq:ex2})  makes $\xi_{feas}^{(0)}(f,\cX) =0$. On the other hand, nonexistence of such an $\x$ ensures that at least one of the equations in system $A\x=\a$ or at least one of the inequalities in $B\x\leq \x$ is not satisfied and the inner maximization can trivially make $\xi_{feas}^{(0)}(f,\cX) =\infty$. The insensitiveness with respect to $\y$ scaling observed in homogenous scenario also applies. That means that, from the feasibility point of view, $\xi_{feas}^{(0)}(f,\cX) =\infty$ and $\xi_{feas}^{(0)}(f,\cX) >0$ are equivalent, which allows restricting to $\|\y\|_2=1$ and ultimately ensures boundedness of $\xi_{feas}^{(0)}(f,\cX)$. Clearly, determining the objective of
\begin{eqnarray}
\xi_{feas}(f,\cX)
& =  &
\min_{\x\in\cX} \max_{\y\in\cY,\|\y\|_2=1}  \lp \y^T \begin{bmatrix}
                                                                A \\ B
                                                              \end{bmatrix}\x +\y^T\begin{bmatrix}
                                                                \a \\ \b
                                                              \end{bmatrix} \rp \nonumber \\
                                                              & =  &
\min_{\x\in\cX} \max_{\y\in\cY_1}  \lp \y^TG\x +\y^T\g \rp,
 \label{eq:ex3}
\end{eqnarray}
is then critically important in characterizing the rfps from (\ref{eq:ex1}). For the completeness, we, of course, recall that, as stated in (\ref{eq:thm6eq6a1a0}), $\cY_y \triangleq\{\y|\hspace{.03in} \|\y\|_2=y,\y\in\cY\}$, and consequently $\cY_1 \triangleq\{\y|\hspace{.03in} \|\y\|_2=1,\y\in\cY\}$.

\subsubsection{Perceptrons}
\label{sec:perc}

Handling random \emph{feasibility} problems (rfps) was also recognized in \cite{StojnicGardGen13,StojnicGardSphErr13,StojnicGardSphNeg13,StojnicDiscPercp13} as tightly connected to studying the so-called perceptrons -- the most fundamental components of any known neural network or machine learning systems. Various properties of perceptrons, either taken individually or as parts of larger network systems, are of interest and the above machinery can be utilized to thoroughly analyze all of them. Such a complete and detailed studying is problem specific though. As is by now a standard practice throughout the paper, in order to preserve the generic nature of the presented considerations, we refrain from the discussions related to particular examples. Instead, we defer specific, detail oriented, analyses to separate papers. Nonetheless, to provide a bit of concreteness and a small prelude to more complex discussions, we below make an exception and briefly highlight the so-called classifying/memorizing perceptrons' ability as an example of the the above machinery's utilization.

If, in (\ref{eq:ex3}), one chooses $\cX$ as any subset of $\mS^n$ (the $n$-dimensional unit sphere) and $m_1=0$ and $m_2=m$ (which implies $\cY_1=\mS_+^{m}$, where $\mS_+^m$ is the positive orthant portion of the $m$-dimensional unit sphere), then $\xi_{feas}(f,\cX)
=\xi_{perc}(f,\cX)$  can be viewed as an objective associated with the classical, so-called, generic perceptrons. Its role is of great  importance in establishing the so-called \emph{capacity} of the perceptrons when used as storage memories or classifiers. In particular, one can define such capacity as
\begin{eqnarray}
\alpha & = &  \lim_{n\rightarrow \infty} \frac{m_1+m_2}{n} = \lim_{n\rightarrow \infty} \frac{m}{n}  \nonumber \\
C_{perp} & \triangleq & \max \{\alpha |\hspace{.08in}  \xi_{feas}(f,\cX)=\xi_{feas}(f,\cX)>0\}.
  \label{eq:ex4}
\end{eqnarray}
Many versions of such perceptrons are of interest and have been studied throughout the literature over the last several decades. Below we select a few well known ones.

\subsubsection{Spherical perceptron}
\label{sec:sphperc}

Choosing additionally  $\cX=\mS^n$  in (\ref{eq:ex3}), one obtains the associated objective of the generic \emph{spherical} perceptrons. A further  particular choice, $\g=\b=-\kappa$ with $\kappa\geq 0$, produces the so-called positive (universal threshold) spherical perceptrons scenario (see, e.g.,  \cite{StojnicGardGen13,StojnicGardSphErr13,StojnicGardSphNeg13,GarDer88,Gar88,Schlafli,Cover65,Winder,Winder61,Wendel,Cameron60,Joseph60,BalVen87,Ven86,SchTir02,SchTir03,Talbook11a}), On the other hand, the choice $\kappa\leq 0$, produces the corresponding negative counterparts (see, e.g.,
\cite{StojnicGardSphErr13,StojnicGardSphNeg13,Talbook11a,Talbook11b,FraPar16,FraHwaUrb19,FraSclUrb19,ParUrbZam20,FPSUZ17,FraSclUrb20}). One then finds
\begin{eqnarray}
 C^{(sph)}_{perp} & = & \max \left \{\alpha |\hspace{.08in}  \xi^{(sph)}_{feas}\lp f,\mS^{n} \rp >0 \right \}.
  \label{eq:ex5}
\end{eqnarray}
In case of the positive spherical perceptrons,  $C^{(sph)}_{perp}$ was determined in
\cite{Cover65,Winder,Winder61,Wendel} for $\kappa=0$ through combinatorial geometric considerations and, decades later, through several different mathematical techniques and for any $\kappa\geq 0$ (see, \cite{StojnicGardGen13,StojnicGardSphErr13,SchTir02,SchTir03,Talbook11a}).  On the other hand, the negative case, turned out to be much harder. Talagrand in \cite{Talbook11a,Talbook11b} conjectured an upper bound which was proven as fully rigorous in  \cite{StojnicGardGen13}. Moreover, \cite{StojnicGardSphNeg13} lowered the upper bound showing that it is not tight but rather strict. In a separate paper, we will present in full details all the underlying technicalities and practical results that one obtains utilizing the above machinery and how they ultimately compare to those of  \cite{StojnicGardGen13,StojnicGardSphNeg13}. Also, various other aspects of spherical perceptrons (see, e.g., \cite{StojnicGardSphErr13}) are of interests and their full detailed treatment is deferred to separate papers as well.

\subsubsection{Binary perceptron}
\label{sec:binperc}

Choosing $\cX=\{-\frac{1}{\sqrt{n}},\frac{1}{\sqrt{n}}\}^n$  in (\ref{eq:ex3}) (i.e., choosing $\cX$ as the corners of the unit norm hypercube), one obtains the associated objective of the generic \emph{binary} perceptrons. A further  particularization, $\g=\b=-\kappa$ with $\kappa\geq 0$ or $\kappa<0$, produces their positive or negative (universal threshold) variants (see, e.g., \cite{StojnicGardGen13,GarDer88,Gar88,StojnicDiscPercp13,KraMez89,GutSte90,KimRoc98,DingSun19,BoltNakSunXu22,NakSun23}).  One then finds
\begin{eqnarray}
 C^{(bin)}_{perp} & = & \max \left \{\alpha |\hspace{.08in}  \xi^{(bin)}_{feas}\lp f,\left \{-\frac{1}{\sqrt{n}},\frac{1}{\sqrt{n}}\right \}^n\rp>0\right \}.
  \label{eq:ex5}
\end{eqnarray}
For $\kappa=0$, utilizing  statistical physics based replica theory, \cite{GarDer88,Gar88} gave some initial rough (replica symmetric) $ C^{(bin)}_{perp}$ estimates, while  \cite{KraMez89} established a more refined prediction $ C^{(bin)}_{perp}\approx0.833$. A substantial progress, towards establishing the \cite{KraMez89}'s replica predictions as fully rigorous lower bounds, was made in \cite{DingSun19}, and later on strengthened in \cite{NakSun23} (see, also, \cite{BoltNakSunXu22} for related small density results and \cite{CXu21} for the capacity's sharp threshold existence).

\subsubsection{Compressed sensing phase transitions}
\label{sec:l1sec}

In \cite{DonohoPol} and a bit later on in \cite{StojnicCSetam09,DonMalMon09,BayMon10} the so-called \emph{weak}  compressed sensing phase transitions (PT) were established. In particular, \cite{StojnicCSetam09} (and a bit earlier, \cite{StojnicICASSP09}) established that set $\cX$ of the following form plays a critical role
 \begin{equation}
\mS_{weak}^n=\left \{\x| \hspace{.05in} \sum_{i=1}^{n-k}|\x_i|\leq \sum_{i=n-k+1}^{n}\x_i, \x\in\mS^n\right \}.\label{eq:ex6}
\end{equation}
where $k\leq m_1$ and $\beta=\lim_{n\rightarrow \infty}\frac{k}{n}$ is a constant independent of $n$. Choosing $\g=0$, $m_1=m$, and $m_2=0$ in (\ref{eq:ex3}) and setting $\alpha=\lim_{n\rightarrow \infty}\frac{m}{n}=\lim_{n\rightarrow \infty}\frac{m_1}{n}$, \cite{StojnicCSetam09} determined, for any given $\beta$, the critical weak $\ell_1$ compressed sensing $\alpha$ phase transition
\begin{eqnarray}
 \alpha^{(PT,weak)} & \triangleq & \max \left \{\alpha |\hspace{.08in}  \xi_{feas}^{(weak)}(f,\mS_{weak}^n)>0 \right \}.
  \label{eq:ex7}
\end{eqnarray}
Moreover, \cite{DonohoPol} also established lower bounds on the substantially more challenging, so-called, strong and sectional $\ell_1$ phase transitions. \cite{StojnicCSetam09} established corresponding alternative ones and \cite{StojnicLiftStrSec13} lifted and improved over both \cite{StojnicCSetam09} and \cite{DonohoPol} bounds. For example, for sectional PTs, \cite{StojnicLiftStrSec13,StojnicCSetam09}
established that set $\cX$ of the following form
 \begin{equation}
\mS_{sec}^n=\left \{\x| \hspace{.05in} \sum_{i=1}^{n-k}|\x_i|\leq \sum_{i=n-k+1}^{n}|\x_i|, \x\in\mS^n\right \},\label{eq:ex8}
\end{equation}
plays a critical role in determining sectional $\ell_1$ compressed sensing $\alpha$ phase transition
\begin{eqnarray}
 \alpha^{(PT,sec)} & \triangleq & \max \left \{\alpha |\hspace{.08in}  \xi_{feas}^{(sec)}(f,\mS_{sec}^n)>0 \right \}.
  \label{eq:ex9}
\end{eqnarray}

The above examples are very few illustrative ones provided to give a hint as to how wide the range of applications of our results actually is. Further studying of the above presented machinery in the context of each of these applications is therefore of great interest and will be explored in separate papers.

\section{Summary of the fully lifted (fl) RDT formalism}
\label{sec:summary}

Since the introduced machinery is a very powerful mathematical engine, we find it useful to formalize its main steps in a neat way to make it more approachable and easier to use. As the discussion from Sections \ref{sec:randlincons} and \ref{sec:practical} revealed, there are four essential steps that are the foundational components of the entire formalism. The last three of them are clearly visible in Theorem \ref{thm:thmsflrdt1} and relate to the concepts of random primal/dual and strong sfl random duality. The first one basically incorporates algebraic characterizations of the underlying problems that (if possible) transform them into forms on which fl RDT is applicable. Clearly, this step is directly preceding all the other three.  A clear summary of all the four steps (precisely in the order how they should be done) that ensure proper application of all the fl RDT concepts is highlighted below.

\vspace{-.0in}\begin{center}
\tcbset{colback=orange!40!white!40!yellow,colframe=blue!75!black,fonttitle=\bfseries,title style={left color=black, right color=cyan},interior style={left color=yellow!10!white,right color=yellow!80!white}}
\begin{tcolorbox}[beamer,title=Fully lifted RDT -- summary of the main concepts \cite{Stojnicnflgscompyx23,Stojnicsflgscompyx23}, width=5.7in]
\vspace{-.1in}
 \emph{\begin{enumerate}
\item  \bl{\textbf{Algebraic characterization and creation of (fully lifted) random primal}}
  \item \bl{\textbf{Determining fully lifted random dual}}
 \item \bl{\textbf{Handling fully lifted random dual}}
\item \bl{\textbf{Ensuring strong fully lifted random duality.}}
  \end{enumerate}}
  \vspace{-.2in}$ $
\end{tcolorbox}
\end{center}\vspace{-.1in}

The above, of course, in a way, resembles the main principles of the plain (non-lifted) RDT itself from \cite{StojnicRegRndDlt10,StojnicCSetam09,StojnicUpper10,StojnicICASSP10block,StojnicICASSP10var} and the corresponding ones of the (partially) lifted RDT from, e.g., \cite{StojnicLiftStrSec13,StojnicGardSphNeg13,StojnicGardSphErr13,StojnicAsymmLittBnds11,StojnicMoreSophHopBnds10}. We below emphasize the connections and differences among all three.

\subsection{Connections between \emph{fully} lifted, \emph{partially} lifted,  and \emph{non}-lifted RDTs}
\label{sec:summaryconnect}

In scenarios where $f(\x)$ and $\cX$ are such that the strong deterministic duality holds, i.e. such that (\ref{eq:randlincons2}) can be further rewritten as
\begin{eqnarray}
\xi(f,\cX) & = & \min_{\x\in\cX}\max_{\lambda\geq 0,\nu} \lp f(\x)+\nu^TA\x+\lambda^TB\x \rp
 = \min_{\x\in\cX}\max_{\lambda\geq 0,\nu} \lp f(\x)+\begin{bmatrix}\nu^T \lambda^T\end{bmatrix}\begin{bmatrix} A \\ B\end{bmatrix}\x \rp \nonumber \\
 & = & \min_{\x\in\cX}\max_{\y\in\cY} \lp f(\x)+\y^T G\x  \rp, \nonumber \\
 & = & \max_{\y\in\cY} \min_{\x\in\cX}\lp f(\x)+\y^T G\x  \rp,
  \label{eq:conn1}
\end{eqnarray}
then \cite{StojnicRegRndDlt10,StojnicCSetam09,StojnicUpper10,StojnicICASSP10block,StojnicICASSP10var} established the plain (non-lifted) RDT that can be summarized in the following \emph{non-lifted} corollary of Theorem \ref{thm:thmsflrdt1}.
\begin{corollary}
\label{cor:conn1} (Strong deterministic duality $\implies$ strong (non-lifted) random duality \cite{StojnicRegRndDlt10,StojnicCSetam09,StojnicUpper10,StojnicICASSP10block,StojnicICASSP10var}) Assume the setup of Theorem \ref{thm:thmsflrdt1} and that $f(\x)$ and $\cX$ are such that the strong deterministic duality from (\ref{eq:conn1}) holds. Then for $r=2$, $\hat{\c}\rightarrow[1,1,0,0]$, $\hat{\p}\rightarrow[1,1,0,0]$, and $\hat{\q}\rightarrow[1,1,0,0]$
 \begin{eqnarray}\label{eq:corconn1eq1}
    \lim_{n\rightarrow\infty} \frac{\mE_G  \psi_{rp}}{\sqrt{n}}
  & = &
\min_{x>0} \max_{y>0} \lim_{n\rightarrow\infty} \psi_{rd}(\hat{\p},\hat{\q},\hat{\c},x,y) \qquad \mbox{(\bl{\textbf{strong non-lifted random duality}})},\nonumber \\
 \end{eqnarray}
where $\psi_{S,\infty}(\cdot)$ is as in (\ref{eq:fl4}).
 \end{corollary}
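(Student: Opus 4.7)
The plan is to verify Corollary \ref{cor:conn1} by direct specialization of Theorem \ref{thm:thmsflrdt1} at the stated degenerate parameters, and then upgrading the one-sided random dual bound to equality using the min-max swap permitted by strong deterministic duality.

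First, I would substitute $\hat{\p} = \hat{\q} = [1,1,0,0]$ into the definitions (\ref{eq:thm3eq2a00}) of the auxiliary coefficients. Since $\p_0 = \p_1 = 1$ and $\p_2 = \p_3 = 0$ (and analogously for $\q$), only the $k=2$ coefficients survive: $a_2 = b_2 = c_2 = 1$, while $a_1 = b_1 = c_1 = a_3 = b_3 = c_3 = 0$. Consequently, in (\ref{eq:fl5}) all multi-index Gaussian sums collapse to their $k=2$ contributions, and
\[
D_{0,S,\infty} = -\min_{\x\in\cX_x}\max_{\y\in\cY_y}\lp f(\x) + y\,\h^T\x + x\,\y^T\u\rp,
\]
where $\h\in\mR^n$ and $\u\in\mR^m$ are independent standard normal vectors. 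The choice $\hat{\c} = [1,1,0,0]$, interpreted as the limit $\c_2\to 0^+$, collapses the nested moment chain in (\ref{eq:fl4}) via the elementary identity $\lim_{\c\to 0}\frac{1}{\c}\log\mE[Z^\c] = \mE\log Z$, giving $\psi_{S,\infty} \to \frac{1}{n}\mE_{G,\h,\u}\,D_{0,S,\infty}$. In the same limit the penalty term $\frac{x^2y^2}{2}\sum_{k\geq 2}(\p_{k-1}\q_{k-1}-\p_k\q_k)\c_k = \frac{x^2y^2\c_2}{2}$ vanishes. Hence $\psi_{rd}(\hat\p,\hat\q,\hat\c,x,y)$ reduces to the classical one-step Gordon-type random dual.

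Second, I would invoke the strong deterministic duality (\ref{eq:conn1}), which lets me rewrite $\psi_{rp}$ with its inner and outer extrema swapped. The Gordon-type comparison that Theorem \ref{thm:thmsflrdt1} provides for the min-max ordering yields an upper bound on $\mE_G\psi_{rp}/\sqrt{n}$ by the reduced $\psi_{rd}$, while the same one-step comparison applied to the swapped max-min saddle yields the matching lower bound (with the roles of $\x$ and $\y$ interchanged in the sandwiching argument, but the reduced random dual unchanged because the expression is symmetric under this swap). Since the deterministic min-max and max-min coincide, both inequalities collapse to equality, which is precisely (\ref{eq:corconn1eq1}).

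The main technical obstacle is ensuring that $[1,1,0,0]$ qualifies as a genuine solution of the stationarity system (\ref{eq:thmsflrdt2eq2}), given that the derivatives are taken on the boundary of the admissible region where $\p_2 = \q_2 = \c_2 = 0$ and the $1/\c_r$ prefactor in (\ref{eq:fl4}) is singular. I would handle this via an envelope-type argument: under strong deterministic duality the non-lifted bound already matches the primal, so no nontrivial choice of $\p_k, \q_k, \c_k$ for $k\geq 2$ can strictly improve $\psi_{rd}$; the stationarity conditions therefore hold in the appropriate limiting sense at the degenerate boundary point, and the corollary follows.
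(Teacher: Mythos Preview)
The paper's own ``proof'' of this corollary is a bare citation: it simply says the result was shown in \cite{StojnicRegRndDlt10,StojnicCSetam09,StojnicUpper10,StojnicICASSP10block,StojnicICASSP10var} and gives no argument in the present text. Your proposal, by contrast, actually sketches the mechanism---the degenerate specialization of $(\hat\p,\hat\q,\hat\c)$ collapsing the fl random dual to the classical Gordon dual, and then the two-sided sandwich via the min--max swap permitted by strong deterministic duality. This is in substance exactly what the cited non-lifted RDT papers do, so your route is not really different from the paper's; it is the paper's route, just unpacked rather than deferred.

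One point to tighten: you write that ``the Gordon-type comparison that Theorem \ref{thm:thmsflrdt1} provides \dots yields an upper bound.'' Theorem \ref{thm:thmsflrdt1} as stated gives an \emph{equality} at the stationarity solutions, not a one-sided comparison. The one-sided Gordon bounds you actually need for the sandwich come from the underlying interpolation (essentially Theorem \ref{thm:thm4} with the sign of $\phi_{k_1}^{(r)}$ controlled, or equivalently the classical Gordon inequality invoked in the cited references), not from Theorem \ref{thm:thmsflrdt1} itself. Your envelope remark in the last paragraph is the right way to reconcile this with the stationarity requirement: once the non-lifted dual already matches $\psi_{rp}$, the degenerate boundary point is optimal and hence stationary in the limiting sense, so Theorem \ref{thm:thmsflrdt1} is consistent with, rather than the source of, the equality.
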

\begin{proof}
Shown in \cite{StojnicRegRndDlt10,StojnicCSetam09,StojnicUpper10,StojnicICASSP10block,StojnicICASSP10var}, together with corresponding concentrating and probabilistic statements.
\end{proof}

In a less technical language, \cite{StojnicRegRndDlt10,StojnicCSetam09,StojnicUpper10,StojnicICASSP10block,StojnicICASSP10var} essentially established that the strong deterministic duality actually implies strong non-lifted random duality. In other words, sfl RDT simplifies to non-lifted RDT when the strong deterministic duality is in place. On the other hand, when the strong deterministic duality is not in place, then things need to be carefully reconsidered. A long line of work \cite{StojnicLiftStrSec13,StojnicGardSphNeg13,StojnicGardSphErr13,StojnicAsymmLittBnds11,StojnicMoreSophHopBnds10} made a substantial progress in that direction and established the following \emph{partially lifted} corollary of Theorem \ref{thm:thmsflrdt1}.
\begin{corollary}
\label{cor:conn2} (Partially lifted RDT \cite{StojnicLiftStrSec13,StojnicGardSphNeg13,StojnicGardSphErr13,StojnicAsymmLittBnds11,StojnicMoreSophHopBnds10}) Assume the setup of Theorem \ref{thm:thmsflrdt1}. Then for $r=2$, $\hat{\c}\rightarrow[1,1,\c_2,0]$, $\hat{\p}\rightarrow[1,1,0,0]$, and $\hat{\q}\rightarrow[1,1,0,0]$
 \begin{align}\label{eq:corconn2eq1}
   \psi_{rp} & \triangleq  \min_{\x\in\cX} \max_{\y\in\cY} \lp f(\x)+\y^TG\x \rp
   \qquad  \mbox{(\bl{\textbf{random primal}})} \nonumber \\
   \psi_{rd}(\hat{\p},\hat{\q},\c,x,y) & =   \frac{x^2y^2}{2}\c_2
  - \psi_{S,\infty}(f(\x),\calX,\calY,\hat{\p},\hat{\q},\c,x,y) \qquad \mbox{(\bl{\textbf{partially lifted random dual}})}. \nonumber \\
 \end{align}
Let  $\hat{\c}=\hat{\c}(x,y)$ be the solution of the following system
\begin{eqnarray}\label{eq:corconn2eq2}
    \frac{d \psi_{rd}(\hat{\p},\hat{\q},\c,x,y)}{d\c} =  0.
 \end{eqnarray}
 Then,
\begin{eqnarray}\label{eq:corconn2eq3}
    \lim_{n\rightarrow\infty} \frac{\mE_G  \psi_{rp}}{\sqrt{n}}
  & \geq  &
\min_{x>0} \max_{y>0} \lim_{n\rightarrow\infty} \psi_{rd}(\hat{\p},\hat{\q},\hat{\c}(x,y),x,y) \qquad \mbox{(\bl{\textbf{partially lifted random duality}})},\nonumber \\
 \end{eqnarray}
where $\psi_{S,\infty}(\cdot)$ is as in (\ref{eq:fl4}).
\end{corollary}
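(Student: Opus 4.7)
The plan is to specialize Theorem \ref{thm:thmsflrdt1} to $r=2$ with $\p, \q$ pinned at the corner $[1,1,0,0]$ and only $\c$ subjected to stationarity. In the complete sfl RDT, the equality in (\ref{eq:thmsflrdt2eq3}) comes from \emph{fully} stationarizing $\psi_1$ in $(\p,\q,\c)$; restricting $(\p,\q)$ to a non-stationary corner and retaining only $\c$-stationarity produces a one-sided extremum rather than a saddle, which is precisely the source of the inequality in (\ref{eq:corconn2eq3}) in place of the equality of (\ref{eq:thmsflrdt2eq3}).

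I would first carry out the algebraic collapse. With $\hat{\p}=\hat{\q}=[1,1,0,0]$, the coefficients from (\ref{eq:thm3eq2a00}) satisfy $b_1=c_1=0$ and $b_3=c_3=0$, while $b_2=c_2=1$, so only the $k=2$ contributions survive inside $D_{0,S,\infty}$. Similarly, the telescoping sum $\sum_{k=2}^{r+1}(\p_{k-1}\q_{k-1}-\p_k\q_k)\c_k$ reduces to the single term $\c_2$, since the $k=3$ summand vanishes from $\p_2=\q_2=0$. This reproduces exactly the compact form of $\psi_{rd}(\hat{\p},\hat{\q},\c,x,y)$ displayed in (\ref{eq:corconn2eq1}) and identifies the right-hand side of (\ref{eq:corconn2eq3}) with the one-parameter Gordon-style partially lifted functional from \cite{StojnicLiftStrSec13,StojnicGardSphNeg13,StojnicGardSphErr13,StojnicAsymmLittBnds11,StojnicMoreSophHopBnds10}, so the statement reduces to checking the direction of a single one-sided bound.

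Next, I would settle the sign of the inequality. The derivative identity (\ref{eq:thm3eq6}) together with $\mbox{sign}(s)=-1$ endows $\psi_1$ with a min-over-$\p$, max-over-$\q$, stationary-in-$\c$ saddle-point structure at its unconstrained extremum; evaluating at the corner $[1,1,0,0]$ therefore produces a value on a definite side of the true saddle value. Tracking this through the chain (\ref{eq:thm6eq6a1a15})--(\ref{eq:thm6eq7})---which includes a sign reversal when going from $-\psi_1$ to $\lim_n \mE_G \psi_{rp}/\sqrt{n}$---one obtains a lower bound rather than an equality, as claimed in (\ref{eq:corconn2eq3}). The principal obstacle is precisely this sign-bookkeeping: one must match the min/max role of each coordinate of $\p,\q$ at the unconstrained saddle with the direction of the deviation introduced by pinning them to $[1,1,0,0]$, and confirm that the composition of all the intervening sign flips produces the lower bound direction (and not its reverse). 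Once that verification is in hand, the corollary follows at once from Theorem \ref{thm:thmsflrdt1} by direct substitution, and the prior partially lifted RDT results of \cite{StojnicLiftStrSec13,StojnicGardSphNeg13,StojnicGardSphErr13,StojnicAsymmLittBnds11,StojnicMoreSophHopBnds10} emerge as a direct corollary of the sfl framework.
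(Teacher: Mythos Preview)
The paper's own proof is a one-line citation: the inequality (\ref{eq:corconn2eq3}) is not derived here from the sfl machinery at all, but is simply quoted as having been established directly in \cite{StojnicLiftStrSec13,StojnicGardSphNeg13,StojnicGardSphErr13,StojnicAsymmLittBnds11,StojnicMoreSophHopBnds10} via Gordon-type comparison arguments with a single lifting parameter. Your route is therefore genuinely different: you attempt to obtain the partially lifted bound as a degeneration of Theorem~\ref{thm:thmsflrdt1} by freezing $(\p,\q)$ at the corner $[1,1,0,0]$ and keeping only the $\c$-stationarity.

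Your algebraic collapse is correct: with $\hat{\p}=\hat{\q}=[1,1,0,0]$ the telescoping sum indeed reduces to the single term $\c_2$ and only the $k=2$ pieces survive in $D_{0,S,\infty}$, so (\ref{eq:corconn2eq1}) is exactly what Theorem~\ref{thm:thmsflrdt1} produces at that corner. The gap is in the second step. Theorem~\ref{thm:thmsflrdt1} (and Theorem~\ref{thm:thm5} behind it) delivers an \emph{equality} at the \emph{fully stationary} $(\bar{\p},\bar{\q},\bar{\m})$ solving (\ref{eq:thmsflrdt2eq2}); it does not, as presented in this paper, assert a variational min--max characterization of $\psi_{rd}$ over $(\p,\q,\c)$. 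You invoke a ``min-over-$\p$, max-over-$\q$'' saddle structure of $\psi_1$ to conclude that pinning $(\p,\q)$ at a non-stationary corner lands on a definite side of the true value, but nothing in Sections~\ref{sec:randlincons}--\ref{sec:practical} establishes such convexity/concavity; the derivative identity (\ref{eq:thm3eq6}) gives the sign of $d\psi/dt$, not monotonicity of $\psi_1$ in $\p$ or $\q$. You yourself flag this sign-bookkeeping as the ``principal obstacle'' and leave it as something to be verified. That verification is precisely the content of the cited partially lifted papers, which obtain the one-sided inequality directly from a lifted Gordon comparison rather than by restricting a stationarity system. So as a self-contained derivation from Theorem~\ref{thm:thmsflrdt1} your plan is incomplete at the crucial point; as a reduction showing that the cited bound sits inside the sfl hierarchy at $r=2$ with $(\p,\q)$ pinned, it is fine and matches the informal remark following the corollary.
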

\begin{proof}
Shown in \cite{StojnicLiftStrSec13,StojnicGardSphNeg13,StojnicGardSphErr13,StojnicAsymmLittBnds11,StojnicMoreSophHopBnds10}, together with corresponding concentrating and probabilistic statements.
\end{proof}
Speaking a bit less technically, \cite{StojnicLiftStrSec13,StojnicGardSphNeg13,StojnicGardSphErr13,StojnicAsymmLittBnds11,StojnicMoreSophHopBnds10}  established that the partially lifted RDT produces valid lower bounds, which sometimes (say, when the strong deterministic duality holds) are tight. Finally, carefully looking at the above corollary (and, ultimately, the results of, e.g., \cite{StojnicLiftStrSec13,StojnicGardSphNeg13,StojnicGardSphErr13,StojnicAsymmLittBnds11,StojnicMoreSophHopBnds10}), one observes that the partial lifting of \cite{StojnicLiftStrSec13,StojnicGardSphNeg13,StojnicGardSphErr13,StojnicAsymmLittBnds11,StojnicMoreSophHopBnds10} corresponds to a partial variant of the second level of fully lifted RDT.

\section{Conclusion}
\label{sec:conc}

A class of famous linearly constrained optimization problems is considered and their behavior in a random medium is studied. These types of problems were initially considered in \cite{StojnicRegRndDlt10,StojnicCSetam09,StojnicUpper10,StojnicICASSP10block,StojnicICASSP10var} where foundational principles of (non-lifted) random duality theory (RDT) were established. In particular, for a subclass of problems where the strong deterministic duality holds, \cite{StojnicRegRndDlt10,StojnicCSetam09,StojnicUpper10,StojnicICASSP10block,StojnicICASSP10var} proved the existence of the so-called strong (non-lifted) random duality which then was sufficient to analytically fully characterize the behavior of the underlying problems, even without actually solving them.

A long line of work, \cite{StojnicLiftStrSec13,StojnicGardSphNeg13,StojnicGardSphErr13,StojnicAsymmLittBnds11,StojnicMoreSophHopBnds10}, went further and introduced a partially lifted upgrade as a way of attacking scenarios where the non-lifted RDT is not at its full power. Here, we further upgrade partially lifted RDT  to the \emph{fully lifted} (fl) one. In particular,
we first recognize a connection between  the random linearly constrained optimization programs of interest here and the recent progress made in studying bilinearly indexed random processes in \cite{Stojnicnflgscompyx23,Stojnicsflgscompyx23}. As a result of such a connection, we are then able to introduce the ultimate level of fully lifted (fl) RDT and its a complete stationarized variant, sfl RDT.

The whole machinery is presented in a generic way and is consequently a very powerful generic mathematical engine that can be applied in a plethora of different scenarios. To give a hint at the range of potential applications, we first showed that in addition to handling the random \emph{optimization} problems (rops), it can also be equally successfully used in handling random \emph{feasibility} problems (rfps). As concrete examples of applications, we selected a few well known feasibility problems related to modern topics in machine learning/neural networks and compressed sensing. These include the so-called  \emph{spherical} perceptrons   (see, e.g., \cite{StojnicGardGen13,StojnicGardSphErr13,StojnicGardSphNeg13,GarDer88,Gar88,Schlafli,Cover65,Winder,Winder61,Wendel,Cameron60,Joseph60,BalVen87,Ven86,SchTir02,SchTir03}), \emph{binary} perceptrons (see, e.g., \cite{StojnicGardGen13,GarDer88,Gar88,StojnicDiscPercp13,KraMez89,GutSte90,KimRoc98}), and the \emph{sectional} compressed sensing $\ell_1$ phase transitions (see, e.g., \cite{StojnicCSetam09,StojnicLiftStrSec13}).

The introduced methodology also allows various extensions and generalizations. For example, all those discussed in \cite{Stojnicnflgscompyx23,Stojnicsflgscompyx23}, including the ones related to the so-called Hopfield models (see, e.g., \cite{Hop82,PasFig78,Hebb49,PasShchTir94,ShchTir93,BarGenGueTan10,BarGenGueTan12,Tal98,StojnicMoreSophHopBnds10}), asymmetric Little models
(see, e.g.,  \cite{BruParRit92,Little74,BarGenGue11bip,CabMarPaoPar88,AmiGutSom85,StojnicAsymmLittBnds11})  and many others, are immediately applicable here. Moreover, many further generalizations and applications beyond those discussed in \cite{Stojnicnflgscompyx23,Stojnicsflgscompyx23} are possible as well. Studying such extensions, however, is often problem specific and requires a few technical modifications. We will present many interesting results obtained in these directions in separate papers.

Finally we should mention that the considered statistical context assumes the standard Gaussianity of the primal models. These assumptions make the presentation less cumbersome and clearer to follow. However, all presented results can be adapted to hold for way more general statistical ensembles. The central limit theorem and, in particular, its Lindeberg variant \cite{Lindeberg22}, can  be incorporated to show this. A particularly elegant approach in this regard can be found in \cite{Chatterjee06}.

\begin{singlespace}
\bibliographystyle{plain}
\bibliography{nflgscompyxRefs}
\end{singlespace}

\end{document}